\documentclass[12pt,oneside]{amsart}

\usepackage{amssymb}
\usepackage{amsmath}
\usepackage{verbatim}
\usepackage[retainorgcmds]{IEEEtrantools}

\usepackage[margin=30 mm,heightrounded=true,centering]{geometry}
\setlength{\marginparwidth}{2.1cm}




\theoremstyle{plain}
 \newtheorem{theorem}{Theorem}[section]
 \newtheorem{lemma}[theorem]{Lemma}
 \newtheorem{corollary}[theorem]{Corollary}
      
 \theoremstyle{definition}
 \newtheorem{definition}[theorem]{Definition}
      
 \theoremstyle{remark}

\makeatletter
\def\@setcopyright{}
\def\serieslogo@{}
\makeatother

\begin{document}

\author[Ran Ji]{RAN JI}

\title[Asymptotic Dirichlet Problem]{The asymptotic Dirichlet problems on manifolds with unbounded negative curvature}

\maketitle

\begin{abstract}
Elton P. Hsu used probabilistic method to show that the asymptotic Dirichlet problem is uniquely solvable under the curvature condition $-C e^{2-\eta}r(x) \leq K_M(x)\leq -1$ with $\eta>0$. We give an analytical proof of the same statement. In addition, using this new approach we are able to establish two boundary Harnack inequalities under the curvature condition $-C e^{(2/3-\eta)r(x)} \leq K_M(x)\leq -1$ with $\eta>0$. This implies that there is a natural homeomorphism between the Martin boundary and the geometric boundary of $M$. As far as we know, this is the first result of this kind under unbounded curvature conditions. Our proof is a modification of an argument due to M. T. Anderson and R. Schoen. 
\end{abstract}

\maketitle

\section{Introduction}

In this paper we discuss the solvability of the asymptotic Dirichlet problem and the equivalence of the geometric and Martin boundary on manifolds with negative curvature. 

Let $M$ be a complete, simply connected n-dimensional Riemannian manifold whose sectional curvature is bounded from above by a negative constant. Fix a base point $p \in M$. It is well known that the exponential map $\mathrm{exp}_p: \mathrm{T}_p M \to M$ is a diffeomorphism. $S(\infty)$, which is defined as the set of equivalence classes of geodesic rays, can be identified with the unit sphere in $\mathrm{T}_p(M)$. A basic fact is that $\overline{M}=M \cup S(\infty)$ with the `cone topology' is a compactification of $M$ \cite{MR1333601}.

Given $\varphi \in C^0(S(\infty))$, the asymptotic Dirichlet problem is to find a continuous function $f$ on $\overline{M}$ such that $f$ is harmonic on $M$ and $f=\varphi$ on $S(\infty)$. The case when $M$ has pinched curvature was solved in 1983 independently by Anderson \cite{And83} and Sullivan \cite{MR730924}. Anderson's approach was to construct appropriate convex sets and use the convexity property of Choi \cite{MR722769}. A simpler proof was given by Anderson and Schoen \cite{MR794369} in 1985. In 1992, Borb\'ely was able to replace the lower bound of the curvature by an unbounded growth function. His proof was based upon that of Anderson, namely he proved the following theorem.

\begin{theorem}(\cite{MR1069289})
\label{Borbely}
Let $M$ be a complete, simply connected Riemannian manifold with negative sectional curvature. Let $r=d(p,\cdot )$ denote the distance function and $\lambda<\dfrac{1}{3}$ be a positive constant. If the sectional curvature $K_M$ satisfies  $K_M(x)\leq -1$ everywhere and $- e^{\lambda r(x)} \leq K_M(x)$ outside a compact subset of $M$, then the asymptotic Dirichlet problem is uniquely solvable.
\end{theorem}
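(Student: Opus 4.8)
The plan is to follow the exhaustion-and-barrier scheme of Anderson, with the lower curvature bound entering only through comparison estimates for the Hessian of the distance function and for Jacobi fields. Uniqueness is the easy half: if $u_1,u_2\in C^0(\overline M)$ are harmonic on $M$ with $u_1=u_2=\varphi$ on $S(\infty)$, then $u_1-u_2$ is continuous on the compact space $\overline M$, harmonic in $M$, and vanishes on $S(\infty)=\partial\overline M$, so the strong maximum principle forces $u_1\equiv u_2$. All the work is in existence.

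For existence, fix $\varphi\in C^0(S(\infty))$, extend it to some $\tilde\varphi\in C^0(\overline M)$, and on the exhaustion $B_k=B_k(p)$ solve $\Delta u_k=0$ in $B_k$ with $u_k=\tilde\varphi$ on $\partial B_k$. The maximum principle gives $\sup_M|u_k|\le\|\varphi\|_{\infty}$, so Harnack's inequality and interior elliptic estimates yield a subsequence converging in $C^2_{\mathrm{loc}}$ to a bounded harmonic function $u$ on $M$. It remains to show that $u$ extends continuously to $\overline M$ with $u=\varphi$ on $S(\infty)$, which is local at each $\xi\in S(\infty)$. Fix $\xi$, let $\gamma_\xi$ be the ray from $p$ to $\xi$, fix $\epsilon>0$, and choose a cone neighborhood $V=\{x:\angle_p(x,\xi)<2\beta,\ r(x)>R\}$ (together with its trace at infinity) so narrow that $|\varphi-\varphi(\xi)|<\epsilon$ on $V\cap S(\infty)$. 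It suffices to produce a \emph{barrier} at $\xi$: a function $w\ge 0$, superharmonic on the cone $\Omega=\{x:\angle_p(x,\xi)<\beta\}$ (away from $p$), with $w\ge 2\|\varphi\|_{\infty}$ on the lateral boundary $\partial\Omega\cap M$ and $w(x)\to0$ as $x\to\xi$ inside $\Omega$. Granting this, $\varphi(\xi)+\epsilon+w$ dominates every $u_k$ on $\partial(\Omega\cap B_k)$ once $R,k$ are large, hence dominates $u$ on $\Omega$ by the maximum principle; letting $x\to\xi$ in $\Omega$ and then $\epsilon\to0$ gives $\limsup_{x\to\xi}u\le\varphi(\xi)$, and the matching lower bound follows symmetrically.

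The heart of the proof is the construction of $w$, and here the two curvature hypotheses work against one another. From $K_M\le-1$ one has $\operatorname{Hess} r\ge(\coth r)(g-dr\otimes dr)$, hence $\Delta r\ge n-1$, so radial factors such as $e^{-\epsilon r}$ are superharmonic (for $0<\epsilon<n-1$) and geodesic rays from $p$ diverge at least as fast as in $\mathbb H^{n}$; from $-e^{\lambda r}\le K_M$ one gets the reverse comparisons, namely $\Delta r\lesssim e^{\lambda r/2}$ and an upper bound — again of order $e^{\lambda r/2}$ — on the growth of Jacobi fields transverse to $\gamma_\xi$, equivalently on $\Delta\angle_p(\cdot,\xi)$ and $|\nabla\angle_p(\cdot,\xi)|$. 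Following Anderson and Borb\'ely I would realize the barrier estimate geometrically: construct a geodesically convex ``tube'' $T$ about $\gamma_\xi$ with slowly growing radius, arranged so that $\overline T\cap S(\infty)$ is a small neighborhood of $\xi$ lying inside the trace of $V$; convexity of $T$ is verified from the two Hessian comparisons, and then Choi's convexity theorem applied to $d(\cdot,T)$, together with the maximum principle and a comparison against a suitable explicit supersolution on $T$ built from $e^{-\epsilon r}$ and an angular cutoff around $\gamma_\xi$, produces the required $w$. (Alternatively, in the style of Anderson and Schoen, one checks directly that such a combination is a supersolution on the cone $\Omega$ via the comparison estimates above.)

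The main obstacle, and the reason for the restriction $\lambda<\tfrac13$, is the competition — in the inequality $\Delta w\le0$, equivalently in the verification that $T$ is convex — between the definite negative contribution coming from $K_M\le-1$ (via the radial factor and $\coth r\ge1$) and the positive contribution generated by the angular cutoff, whose size is controlled only by the degenerating bound $\sim e^{\lambda r/2}$ on transverse Jacobi growth and on $\Delta r$, and which, because the wall of the convex tube at radial parameter $\sim t$ must be built at points lying at distance a fixed multiple of $t$ from $p$, has to be estimated where the curvature bound is correspondingly weaker. One must tune the radial decay rate and the rate at which the angular aperture shrinks with $r$ so that the radial gain strictly dominates for all large $r$; tracking the exponents shows this can be arranged precisely when $3\lambda<1$. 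Making this balance rigorous, while simultaneously ensuring that the convex tubes $T=T_V$ exist with the prescribed traces at infinity so that the barriers $w=w_V$ form an effective family as $V$ shrinks to $\xi$, is the whole difficulty; the remaining ingredients — the Perron construction, the passage to the limit, and uniqueness — are routine.
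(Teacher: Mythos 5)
Your sketch is a faithful high-level account of Borb\'ely's original argument \cite{MR1069289}, which indeed runs through Anderson's convexity machinery: convex tubes around rays, Choi's convexity theorem applied to $d(\cdot,T)$, and barriers manufactured from the convexity. This is, however, a genuinely different route from the one this paper takes. The paper never proves Theorem \ref{Borbely} in isolation; it proves the strictly stronger Theorem \ref{main} (allowing $\lambda<2$), of which Theorem \ref{Borbely} is a special case, by the Anderson--Schoen \emph{averaging} method rather than the Anderson--Choi \emph{convexity} method. Concretely, in Section 3 one extends $\varphi$ radially, mollifies it by averaging over geodesic balls $B_x(d(x))$ whose radius $d(x)=e^{-(1-\delta)r(x)}$ shrinks with $r(x)$ --- the key refinement over the fixed-radius mollification of \cite{MR794369} --- then uses the Toponogov angle estimate (Lemma \ref{hyperbolic}) together with the Bishop volume comparison (Corollary \ref{volume}) to get $\Delta\bar\varphi=O(e^{-\delta r})$, and finally closes the argument with Perron sub/supersolutions $\bar\varphi\mp\alpha e^{-\delta_0 r}$. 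No convex sets, tubes, exhaustion by $B_k$, or Choi-type lemma appear. The trade-off is substantial: in the convexity route the tube wall at parameter $\sim t$ must be built at points lying a fixed multiple of $t$ from $p$, where the lower curvature bound (and hence the Hessian/Jacobi control --- note these grow much faster than $e^{\lambda r/2}$ over macroscopic scales, which is part of why that route is delicate) has already degenerated, and the exponent bookkeeping only closes for $3\lambda<1$; the averaging route localizes every estimate to a shrinking ball about $x$, so the only place the degenerating lower bound enters is through $\Delta\rho$ and the volume ratio on that ball, and it survives up to $\lambda<2$. So your plan is correct in outline and correctly locates where $3\lambda<1$ comes from, but it reproves the weaker statement along Borb\'ely's original lines and leaves unverified exactly the convexity estimates that carry the weight there; if you want the version actually used in this paper, look at the Section 3 construction, which subsumes this theorem with room to spare.
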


Hsu was able to get a better lower bound of the curvature condition using probabilistic method. His result is as follows.

\begin{theorem} (\cite{MR1988474})
\label{main}
Let $M$ be a complete, simply connected Riemannian manifold whose sectional curvature $K_M$ satisfies $- Ce^{\lambda r(x)} \leq K_M(x)\leq -1$ on $M$ for some $\lambda<2$. Then the asymptotic Dirichlet problem is uniquely solvable.
\end{theorem}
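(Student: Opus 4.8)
The plan is to adapt the barrier method of Anderson and Schoen. Uniqueness is immediate from the maximum principle, so the issue is existence. Given $\varphi\in C^0(S(\infty))$, fix a continuous extension $\widetilde\varphi\in C^0(\overline M)$ and, on the exhaustion $B_k=B(p,k)$, solve $\Delta u_k=0$ in $B_k$ with $u_k=\widetilde\varphi$ on $\partial B_k$; the maximum principle gives $\|u_k\|_\infty\le\|\varphi\|_\infty$, and interior elliptic estimates produce a subsequence converging locally uniformly on $M$ to a bounded harmonic function $u$. The whole problem then reduces to showing that $u(x)\to\varphi(\xi_0)$ as $x\to\xi_0$ for each $\xi_0\in S(\infty)$. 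By the standard barrier comparison on truncated cones $C(\xi_0,\delta)\cap B_k$ about the geodesic ray to $\xi_0$ --- sandwiching $u_k$ between $\widetilde\varphi(\xi_0)+\varepsilon+2\|\varphi\|_\infty\,w$ from above and $\widetilde\varphi(\xi_0)-\varepsilon-2\|\varphi\|_\infty\,w$ from below, and letting $k\to\infty$ --- it suffices to construct, for every small $\delta>0$, a function $w=w_{\xi_0,\delta}$ that is $C^{1,1}$ with $\Delta w\le 0$ in the distributional sense, $w\ge 0$, $w(x)\to 0$ as $x\to\xi_0$, and $w\ge 1$ on the lateral boundary $\{\angle_p(\xi_0,\cdot)=\delta\}$ (the remaining, compact, part of the relevant boundary being handled trivially by adding a multiple of the superharmonic function $e^{-r}$).

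To build $w$ I would use Busemann functions. For $\zeta\in S(\infty)$ let $b_\zeta$ be the Busemann function with $b_\zeta(p)=0$; the bound $K_M\le -1$ forces the horospheres to be convex with $\Delta b_\zeta\ge n-1$, so for any $s\in(0,n-1)$ the function $e^{-s\,b_\zeta}$ is superharmonic and positive, and (because geodesic rays to distinct points of $S(\infty)$ diverge, again a consequence of $K_M\le -1$) it tends to $0$ at every boundary point other than $\zeta$. A natural candidate for the barrier is then a large multiple of $\int e^{-s\,b_\zeta}\,d\mu_0(\zeta)$, where $\mu_0$ is a smooth positive density on the circle at infinity $\{\angle_p(\xi_0,\cdot)=\delta\}\cap S(\infty)$: this is automatically superharmonic, nonnegative, and vanishes at $\xi_0$, and --- provided it extends continuously and positively over that circle --- it is bounded below by a positive constant on the compact lateral boundary, so a suitable multiple is $\ge 1$ there. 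Alternatively, in the spirit of Anderson--Schoen's original construction one may assemble $w$ explicitly from a single adapted Busemann-type function, a function of $r$, and the transverse distance, arranging the positive (curvature-error) contributions to $\Delta w$ to be dominated by a negative radial term of order $\beta(n-1)e^{-\beta r}$.

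It is making this dominance succeed --- equivalently, controlling the size of the superharmonic functions above near the region where $w$ is forced to be $\ge 1$ --- that requires the lower bound $K_M\ge -Ce^{\lambda r}$ together with $\lambda<2$. A lower bound for the Busemann functions (equivalently, an upper bound for the Poisson-type kernels $e^{-(n-1)b_\zeta}$) is available only once the curvature is bounded below, and this bound degrades like the growth rate of the solutions of $\eta''=Ce^{\lambda r}\eta$, namely $\eta'/\eta\asymp\sqrt C\,e^{\lambda r/2}$; weighed against the spherical length scale $\asymp e^{-r}$ coming from $K_M\le -1$, one is left needing $e^{(\lambda/2-1)r}$ to decay, i.e. $\lambda<2$. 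I expect this estimate --- two-sided control of Busemann functions, or equivalently of harmonic measure, under an unbounded lower curvature bound --- to be the main obstacle; the remaining ingredients (the reduction to a boundary barrier, convexity of horospheres, divergence of geodesic rays, and smoothing the mild Lipschitz singularities of $w$ at $p$ and along the axis without spoiling $\Delta w\le 0$) are routine. Once the barriers exist, $u|_{S(\infty)}=\varphi$, the continuity of $u$ on $\overline M$, and hence the unique solvability of the asymptotic Dirichlet problem, follow in the usual way.
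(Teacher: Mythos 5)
Your proposal takes a genuinely different route from the paper's. The paper follows Anderson--Schoen's \emph{averaging} argument: it extends $\varphi$ radially to $M\setminus\{p\}$, smooths it by averaging over geodesic balls $B_x(d(x))$ of a \emph{variable} radius $d(x)=e^{-(1-\delta)r(x)}$, and shows---combining the Toponogov angle comparison for the oscillation, the Hessian comparison for $\Delta r$ and $\Delta\rho$, and the Bishop comparison for the volume ratio $\mathrm{Vol}(B_x(d))/\mathrm{Vol}(B_x(d/2))$---that $\Delta\overline\varphi=\mathrm{O}(e^{-\delta r})$. It then brackets $\overline\varphi$ between the super/subsolutions $\overline\varphi\pm\alpha e^{-\delta_0 r}$ and applies Perron's method. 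The variable radius, tuned precisely so that the Bishop volume ratio stays bounded under $K_M\geq -Ce^{(2-2\delta)r}$, is where $\lambda<2$ enters and is the paper's key refinement of the pinched-curvature case. You instead propose the \emph{barrier} route in the spirit of Anderson (1983) and Sullivan: superharmonic cone barriers built as superpositions $\int e^{-s\,b_\zeta}\,d\mu_0(\zeta)$ of Busemann-function exponentials over the angle-$\delta$ sphere at infinity.

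There is a real gap here, and you flag it yourself as ``the main obstacle'': a quantitative lower bound for the barrier on the lateral boundary of the cone, equivalently two-sided control of $b_\zeta$ (or of the Poisson-type kernels $e^{-sb_\zeta}$) under the unbounded lower bound $K_M\geq -Ce^{\lambda r}$. You offer only a heuristic ($\eta'/\eta\asymp\sqrt C\,e^{\lambda r/2}$ against an angular scale $\asymp e^{-r}$) with no proof, and the heuristic as stated does not clearly track the quantity that actually controls the barrier. Even in constant curvature $-1$ the superposition needs a dimension-dependent balance: for $\int e^{-s\,b_\zeta}\,d\mu_0(\zeta)$, with $\mu_0$ a smooth density on the $(n-2)$-sphere $\{\angle_p(\xi_0,\cdot)=\delta\}\cap S(\infty)$, to remain bounded below as $x$ approaches a point of that sphere, the exponent must satisfy $n-2\leq s\leq n-1$ (the lower threshold comes from the codimension-one concentration of $e^{-sb_\zeta}$ near the diagonal, the upper from superharmonicity). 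Under $K_M\geq -Ce^{\lambda r}$ the off-diagonal decay of $e^{-sb_\zeta}$ worsens, and it is far from clear this balance survives up to $\lambda<2$. Indeed the barrier/convexity line of attack has historically stalled at much smaller ranges (Anderson, Sullivan, and Borb\'ely, the latter reaching only $\lambda<1/3$); the jump to $\lambda<2$ came from Hsu's probabilistic argument or, as here, the variable-radius averaging. Until the Busemann estimate is actually proved under the stated curvature hypothesis, your argument is an outline and not a proof.
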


We will give an analytical proof of Theorem \ref{main} in Section 3 based upon that of Anderson and Schoen \cite{MR794369}. A key refinement is that  instead of taking the average $\bar{\varphi}$ of the extended function $\varphi$ in a ball of fixed radius, we let the radius vary. Then with the help of Bishop volume comparison theorem, we can show that even under relaxed curvature growth condition, the argument still works and yields Hsu's result.

On a non-parabolic manifold, i.e., a manifold possesses positive Green's function, one can define the Martin boundary which describes the behavior of harmonic functions at infinity. We will give more details in section 4. A natural question is whether the Martin boundary is the same as the geometric boundary. Anderson and Schoen showed that we can identify them when the manifold has pinched negative curvature.
\begin{theorem}(\cite{MR794369})
\label{AS}
Let $M$ be a complete, simply connected Riemannian manifold whose sectional curvature satisfies $-b^2 \leq K_M \leq -a^2<0$. Then there exists a natural homeomorphism $\Phi:\mathcal{M} \to S(\infty)$ from the the Martin boundary $\mathcal{M}$ of $M$ to the geometric boundary $S(\infty)$. Moreover, $\Phi^{-1}$ is H\"{o}lder continuous.
\end{theorem}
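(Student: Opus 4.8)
The plan is to follow the classical two-step scheme of Anderson--Schoen: first identify the Martin boundary $\mathcal{M}$ with the set of minimal positive harmonic functions and show that this set is parametrized bijectively by $S(\infty)$; then upgrade the resulting bijection $\Phi$ to a homeomorphism and read off the H\"older regularity of $\Phi^{-1}$ from quantitative estimates. Two analytic inputs make everything run. (a) An interior Harnack inequality with a constant depending only on $n,a,b$ on metric balls of a fixed radius, which follows from the Ricci lower bound $\mathrm{Ric}\geq -(n-1)b^2$ via the Cheng--Yau gradient estimate. (b) Superharmonic barriers: since there is no cut locus and $\Delta r\geq (n-1)a\coth(ar)\geq (n-1)a$, while by Hessian comparison the Busemann functions satisfy $\Delta b_\eta\geq (n-1)a$, the functions $e^{-cr}$ and $e^{-c\,b_\eta}$ are globally positive and superharmonic for $0<c<(n-1)a$; combining them (together with the convexity of horoballs coming from $K_M\leq -a^2$, as in Anderson's construction) one produces, on a cone region over any $\xi\in S(\infty)$, a positive superharmonic function that is quantitatively small near $S(\infty)\setminus B(\xi)$. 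Finally $M$ is non-parabolic (the upper curvature bound forces a positive Green's function), so $\mathcal{M}$ is defined.

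The technical core is a \emph{boundary Harnack inequality}: there are constants $C\geq 1$ and $\delta\in(0,1)$, depending only on $n,a,b$, so that if $u,v$ are positive harmonic on $M$ and both vanish continuously on a ball $B=B(\xi)\subset S(\infty)$, then on the cone over the concentric half-ball one has $C^{-1}u(x)/u(p)\leq v(x)/v(p)\leq C\,u(x)/u(p)$, and moreover $x\mapsto u(x)/v(x)$ extends to $\overline{M}$ near $\xi$ and is H\"older continuous of exponent $\delta$ there. I would prove this in two movements. First, a \emph{decay estimate}: normalize $u$ on a horosphere cutting off the cone, compare with the localized superharmonic barrier, and apply the maximum principle to conclude that $u$ decays at a definite geometric rate toward $S(\infty)\setminus B(\xi)$, uniformly in $\xi$. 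Second, a matching \emph{lower bound and oscillation estimate}: connect an interior point $x$ of a sub-cone to a point of $M$ lying deep inside that sub-cone by a Harnack chain of uniformly bounded length (the lower curvature bound keeps the number of fixed-size balls in the chain bounded, via volume comparison), obtaining $u(x)\gtrsim$ (the same power); feeding the upper and lower bounds into a dyadic iteration over shrinking sub-cones gives geometric decay of $\mathrm{osc}(u/v)$, i.e.\ H\"older continuity with exponent $\delta=\delta(n,a,b)$.

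Granting the boundary Harnack inequality, the rest is formal. Applying the decay estimate to the normalized Green's functions $K_y(x)=G(x,y)/G(p,y)$ shows that, for $y$ near $\xi$, the $K_y$ are uniformly small near every boundary point $\eta\neq\xi$ and form a precompact family in the topology of local uniform convergence; the comparison part of the inequality forces every subsequential limit to be the \emph{same} function $K_\xi$, namely the unique (up to a scalar) positive harmonic function vanishing continuously on $S(\infty)\setminus\{\xi\}$, and this uniqueness makes $K_\xi$ minimal (any harmonic $u$ with $0\leq u\leq K_\xi$ also vanishes on $S(\infty)\setminus\{\xi\}$, hence is a multiple of $K_\xi$). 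Thus every point of $\mathcal{M}$ is such a $K_\xi$, and, since $K_\xi$ does not vanish at $\xi$ whereas $K_\eta$ does for $\eta\neq\xi$, the assignment $\Phi:\mathcal{M}\to S(\infty)$, $[K_\xi]\mapsto\xi$, is a well-defined bijection. Its inverse $\Phi^{-1}:\xi\mapsto[K_\xi]$ is continuous, because $\xi_i\to\xi$ in $S(\infty)$ forces $K_{\xi_i}\to K_\xi$ locally uniformly by the same precompactness-plus-uniqueness argument; as $S(\infty)$ is compact and $\mathcal{M}$ Hausdorff, $\Phi^{-1}$, hence $\Phi$, is a homeomorphism. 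The Martin distance between $[K_\xi]$ and $[K_\eta]$ is comparable to $\sup_{B_1(p)}|K_\xi-K_\eta|$, and the oscillation half of the boundary Harnack inequality bounds this by $C\,d_{S(\infty)}(\xi,\eta)^\delta$; this is the asserted H\"older continuity of $\Phi^{-1}$.

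The main obstacle is the second movement of the boundary Harnack inequality: making the Harnack-chain and iteration argument genuinely uniform over all boundary points $\xi$ and all depths into $M$. This is exactly where the \emph{lower} curvature bound $K_M\geq -b^2$ is indispensable, since it controls volumes and hence the number of fixed-size balls needed to chain points together, keeping every constant independent of $\xi$ and of how far out one works; the barrier-and-decay half needs only the upper bound. This asymmetry is precisely what the later sections of the paper must engineer around once the lower bound is allowed to degenerate to $-Ce^{(2/3-\eta)r}$.
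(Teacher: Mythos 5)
This theorem is not proved in the paper: it is quoted from Anderson--Schoen (\cite{MR794369}), and the paper's Sections 4--5 instead prove the generalization Theorem \ref{Martin} under the weaker hypothesis $-Ce^{(2/3-2\delta)r}\le K_M\le -1$. The relevant thing to compare against is therefore the machinery of Sections 4--5, which is the Anderson--Schoen argument adapted to unbounded lower curvature.

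Your outline agrees with the paper in its overall architecture and in the ``first movement'': construct superharmonic barriers from $e^{-\alpha r}$ and a cut-off (Lemma \ref{cut-off}), run the maximum principle in a cone, and get the decay estimate Theorem \ref{Harnack 1}; then derive the Martin/geometric identification formally from the two boundary Harnack inequalities exactly as in Section 4, with the metric $\rho([Y],[Y'])=\sup_{B_p(1)}|h_Y-h_{Y'}|$ giving the H\"older statement. Where you diverge is in your ``second movement.'' You propose to prove the comparison inequality (Theorem \ref{Harnack 2}) by a Harnack-chain lower bound plus a dyadic iteration of oscillation over shrinking sub-cones, in the spirit of CFMS/Ancona. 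The paper (and Anderson--Schoen) do something quite different: they build an explicit superharmonic majorant directly out of the harmonic function $u$ itself, namely $f=u^{1-\varphi}\xi^{-\epsilon\varphi}$ with $\xi=-\log u$ and $\varphi$ a cut-off interpolating between the sub-cone and $\partial C_p(\theta_1)$, then post-multiply by a carefully chosen slowly varying factor $\psi(\xi+e^{-\beta r})$ and verify $\Delta F\le 0$ via Yau's gradient estimate. The maximum principle then gives $v\le CF\le C'u$ in one stroke, with no chains and no iteration. The Anderson--Schoen route buys you a constant that you can track through cut-off estimates, which is precisely what lets the later sections push the lower curvature bound to $-Ce^{(2/3-2\delta)r}$.

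There is a gap in your second movement as stated. You claim to connect ``an interior point $x$ of a sub-cone to a point of $M$ lying deep inside that sub-cone by a Harnack chain of uniformly bounded length.'' In a space with $K_M\le -a^2<0$, two points at radial distance $r$ whose angular separation at $p$ is a fixed $\theta>0$ are at geodesic distance $\sim 2r$, so any chain of fixed-size balls joining $x$ at depth $r$ to the reference point $p'$ (or to the axis of the cone at the same depth) must have $O(r)$ links, not $O(1)$; the naive Harnack lower bound $u(x)\ge c^{O(r)}u(p')$ does not match the barrier upper bound $u(x)\le Ce^{-\alpha r}u(p')$ in exponent. To make the chain/iteration approach close, you would need the full Ancona-type iteration (or a Carleson-plus-oscillation scheme), in which chains are only used at a single scale to compare points at comparable depth, and the decay estimate is fed back in at each dyadic step to propagate across scales. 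Your sketch elides this, and it is exactly the step where the exponential geometry of the cone makes the argument much more delicate than its Euclidean Lipschitz-domain counterpart. The paper's explicit-barrier construction sidesteps the issue entirely.
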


To prove Theorem \ref{AS}, they established two boundary Harnack inequalities, which estimate the growth of positive harmonic functions in cones which vanish continuously at infinity. In Section 5, we relax the curvature assumption in Theorem \ref{AS} and establish the Harnack inequalities. It follows that the Martin boundary can be identified with the geometric boundary. To be precise, we prove the following theorem.
\begin{theorem}
\label{Martin}
Let $M$ be a complete, simply connected Riemannian manifold whose sectional curvature $K_M$ satisfies $- Ce^{\lambda r(x)} \leq K_M(x)\leq -1$ on $M$ for some $\lambda<\dfrac{2}{3}$. Then there is a natural homeomorphism between the geometric boundary and Martin boundary of $M$.
\end{theorem}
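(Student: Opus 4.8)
The plan is to adapt the method of Anderson--Schoen \cite{MR794369} behind Theorem~\ref{AS}: one first establishes two boundary Harnack inequalities for positive harmonic functions on truncated cones that vanish continuously on a piece of $S(\infty)$, and then deduces the homeomorphism between $\mathcal{M}$ and $S(\infty)$ from them by an argument which is by now standard. The places where the pinching hypothesis of Theorem~\ref{AS} is genuinely used are the construction of comparison functions on cones and the interior gradient/Harnack estimate; everything else is formal, so the work is to rebuild these two ingredients under $-Ce^{\lambda r(x)}\le K_M(x)\le -1$, in the same spirit as the varying-radius refinement of Section~3.

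Fix $\xi\in S(\infty)$ and an aperture $\theta$, and write $C_{\xi,\theta}$ for the cone at $\xi$ of opening $\theta$ about a geodesic ray $\gamma_\xi$ from $p$. The first step is to produce a positive super-solution $w$ on the truncated cone $C_{\xi,\theta}\cap\{r\ge R_0\}$ that is comparable to $1$ on the lateral boundary $\partial C_{\xi,\theta}$ and on $\{r=R_0\}$ and satisfies $w(x)\to 0$ as $r(x)\to\infty$ inside $C_{\xi,\theta/2}$, together with a companion positive sub-solution $\underline{w}$ on $C_{\xi,\theta/2}$ that vanishes on $\partial C_{\xi,\theta/2}$ and whose decay as $r\to\infty$ matches that of $w$ up to a multiplicative constant. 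As in Hsu's argument and Section~3, the essential device is to let the relevant scale vary with $r$: the angular width one can tolerate at distance $r$ and the radial decay exponent are chosen as functions of $r$ calibrated against the Hessian and Laplacian comparison bounds $\nabla^2 r\lesssim e^{\lambda r/2}$ and $\Delta r\lesssim e^{\lambda r/2}$ valid where $|K_M|\sim e^{\lambda r}$. Feeding $w$ and $\underline w$ into the maximum principle yields \emph{Boundary Harnack Inequality~I}: a positive harmonic $u$ vanishing continuously on $B(\xi,2\theta)\cap S(\infty)$ satisfies $c\,u(A_\theta)\,\underline w(x)\le u(x)\le C\,u(A_\theta)\,w(x)$ for $x\in C_{\xi,\theta/2}$, where $A_\theta$ is a fixed corkscrew point; in particular $u\to 0$ along every ray of $C_{\xi,\theta/2}$.

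The second step promotes this to \emph{Boundary Harnack Inequality~II}: if $u,v>0$ are harmonic and both vanish continuously on $B(\xi,2\theta)\cap S(\infty)$, then $u(x)/v(x)\le C\,u(A_\theta)/v(A_\theta)$ throughout $C_{\xi,\theta/2}$. This follows by combining the upper bound for $u$ and the lower bound for $v$ furnished by BHI~I with the comparability of $w$ and $\underline w$, the only chaining of interior Harnack inequalities being along the bounded-length path joining the finitely many corkscrew points, whose Harnack constant is controlled since these points lie at bounded distance from $p$. Given the two inequalities the homeomorphism is obtained as in \cite{MR794369}: for $y_i\to\xi$ the normalized Green functions $K(\cdot,y_i)=G(\cdot,y_i)/G(p,y_i)$ eventually vanish continuously on every boundary ball disjoint from $\xi$, so BHI~II forces all subsequential limits to be proportional, hence equal since they all take the value $1$ at $p$; this defines a harmonic function $K_\xi$. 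BHI~II also shows $K_\xi$ is minimal and that $K_\xi$ is not proportional to $K_\eta$ when $\eta\ne\xi$, so $\xi\mapsto[K_\xi]$ is a bijection $S(\infty)\to\mathcal{M}$, and the decay estimates of BHI~I make it a homeomorphism for the cone and Martin topologies.

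The main obstacle is the first step. Under the exponential lower curvature bound the interior Harnack constant on a unit ball at distance $r$ deteriorates like $\exp(Ce^{\lambda r/2})$, and, more seriously, the super-solution inequality $\Delta w\le 0$ must survive a term of size $e^{\lambda r/2}$ coming from $\Delta r$ while $w$ is still required to tend to $0$ and, for BHI~II, to remain comparable to a genuine sub-solution $\underline w$ vanishing on the lateral boundary. Balancing the radial decay profile against the growth of $\nabla^2 r$ so that both the super-solution and sub-solution inequalities close \emph{and} so that their decay rates agree is what I expect to pin down the threshold; because one now needs quantitative control of ratios of harmonic functions rather than mere boundary continuity, this balance is more demanding than in Theorem~\ref{main}, which is why $\lambda$ must be taken below $2/3$ rather than below $2$. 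The technical heart of the argument should therefore be the explicit choice of the profiles of $w$ and $\underline w$ --- a radial decay factor together with an $r$-dependent angular cutoff --- for which the computation works precisely when $\lambda<2/3$.
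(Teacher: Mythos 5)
Your high-level plan matches the paper's: reduce Theorem~\ref{Martin} to the two boundary Harnack inequalities \eqref{harnack 1} and \eqref{harnack 2} (Section~4 is entirely this reduction), then derive the homeomorphism in the standard Anderson--Schoen way. What does not match the paper --- and where the proposal has a genuine gap --- is the mechanism by which the two inequalities are to be proved.

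You propose to build an explicit positive super-solution $w$ and a companion sub-solution $\underline{w}$ with matching decay on a smaller cone, and to run the maximum principle between them. The paper does not do this, and for a good reason: under $K_M\gtrsim -e^{\lambda r}$ it is unclear how to produce a \emph{sub}-solution vanishing on the lateral boundary whose decay is pinned to that of a super-solution, because the angular part of the Laplacian on the sphere of radius $r$ degenerates as the curvature becomes unbounded, and the cross terms with $\Delta r\sim e^{\lambda r/2}$ are not absorbed by any simple radial profile. You explicitly defer this (``the technical heart of the argument should therefore be the explicit choice of the profiles of $w$ and $\underline w$''), so the proposal, as written, stops exactly where the paper begins to do work. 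The paper's actual route is Anderson--Schoen's $u^{\varphi}$ device: one first gets Yau's gradient estimate in the sharpened form $|\nabla\log u|=\mathrm{O}(e^{(1/3-\delta/2)r})$ (Corollary~\ref{local gradient estimate}, exploiting the room between $\lambda$ and $2/3$), builds a cut-off $\varphi$ with $|\Delta\varphi|=\mathrm{O}(e^{-(2/3+\delta)r})$ by the same varying-radius averaging trick as in Section~3 (Lemma~\ref{cut-off}), and then shows that $F=\psi(\log u-e^{-\beta r})\,u^{\varphi}$ is a super-solution for a carefully chosen $\psi$; the comparison function is built out of the unknown $u$ itself, not a universal barrier, which is precisely what lets the argument survive $\Delta r$ blowing up. Theorem~\ref{Harnack 2} then needs yet another $u$-dependent ansatz $f=u^{1-\varphi}\xi^{-\epsilon\varphi}$, not merely comparability of two barriers.

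So the proposal identifies the correct target and the correct threshold phenomenon ($\lambda<2/3$ rather than $\lambda<2$), but the proposed path to the Harnack inequalities is different from the paper's, not carried out, and likely does not close in the sub-solution step. If you want to complete this along the lines of the paper, the missing ingredients are: (i) the gradient estimate of Corollary~\ref{local gradient estimate}; (ii) the varying-radius cut-off of Lemma~\ref{cut-off}; and (iii) the Anderson--Schoen $u^{\varphi}$ and $\psi$ construction, in which the balance giving $\lambda<2/3$ appears concretely as the competition between $|\nabla\log u|^2\sim e^{(2/3-\delta)r}$ and $|\Delta\varphi|\sim e^{-(2/3+\delta)r}$.
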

Our result on the Martin boundary is the first one that allows the sectional curvature go to $-\infty$ as $r \to \infty$. 

\emph{Remark 1.1} While Theorem \ref{main} holds for $\lambda<2$, our proof of Theorem \ref{Martin} is only valid for $\lambda<\dfrac{2}{3}$. This is technically from applying the boundary Harnack inequalities, which are proved only under the stronger curvature condition in our paper. It is possible that those Harnack inequalities are true under relaxed curvature condition.
\\

In the case when the upper bound of the curvature approaches to $0$ at infinity we have similar results. For such manifolds, Hsu used probabilistic method to prove the following theorem on the asymptotic Dirichlet problem.
\begin{theorem}(\cite{MR1988474})
\label{main2}
Let $M$ be a complete, simply connected Riemannian manifold. If the sectional curvature satisfies $K_M \leq -\dfrac{\alpha(\alpha-1)}{r^2}$ for some $\alpha>2$ and the Ricci curvature satisfies $Ric_M \geq -r^{2\beta}$ for some $\beta<\alpha-2$, then the asymptotic Dirichlet problem is uniquely solvable. 
\end{theorem}

An analytic proof of Theorem \ref{main2} will be given in Section 6.  In addition, it is showed in section 7 that the Martin boundary can be identified with the geometric boundary under a stronger curvature condition.
\begin{theorem}
\label{Martin2}
Let $M$ be a complete, simply connected Riemannian manifold. If the sectional curvature satisfies $K_M \leq -\dfrac{\alpha(\alpha-1)}{r^2}$ for some $\alpha>2$ and the Ricci curvature satisfies $Ric_M \geq -r^{2\beta}$ for some $\beta<\dfrac{\alpha-4}{3}$. Then there is a natural homeomorphism between the geometric boundary and Martin boundary of $M$.
\end{theorem}

\textbf{Acknowledgements} The author would like to thank Professor J\'ozef Dodziuk for the invaluable support and guidance. The author would also like to thank Professor Zheng Huang and Professor Marcello Lucia for their useful comments and suggestions.

\section{Preliminaries}

Throughout this section we assume $M$ is a complete, simply connected Riemannian manifold of n dimensions with sectional curvature $K_M(x)\leq -1$.

Denote by $H(-1)$ the two-dimensional hyperbolic plane with constant curvature $-1$. We have the following well known Toponogov comparison theorem \cite{MR1333601}.

\begin{theorem}
\label{Toponogov}
Let $\triangle pxy$ be a geodesic triangle in $M$ with vertices $p,x,y$. Suppose $\triangle\tilde{p}\tilde{x}\tilde{y}$ is the corresponding geodesic triangle in $H(-1)$, such that the corresponding sides have the same length. Then we have $$\angle(px,py)\leq\angle(\tilde{p}\tilde{x},\tilde{p}\tilde{y}),$$
where $\angle(px,py)$ denotes the angle at $p$ between the geodesic segments $px$ and $py$.
\end{theorem}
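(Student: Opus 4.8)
The plan is to prove the hinge form of the estimate and then read off the angle inequality from the hyperbolic law of cosines. Write $\rho_1:=d(p,x)$, $\rho_2:=d(p,y)$, $\theta:=\angle(px,py)$, and let $\sigma:[0,\rho_2]\to M$ be the unit-speed geodesic from $p$ to $y$; similarly write $\tilde\theta:=\angle(\tilde p\tilde x,\tilde p\tilde y)$ and let $\tilde\sigma$ be the geodesic from $\tilde p$ to $\tilde y$ in $H(-1)$, recalling that $d(\tilde p,\tilde x)=\rho_1$, $d(\tilde p,\tilde y)=\rho_2$ and $d(\tilde x,\tilde y)=d(x,y)$ by hypothesis. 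Since both $\theta$ and $\tilde\theta$ lie in $[0,\pi]$, it suffices to show $\cos\theta\ge\cos\tilde\theta$.

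First I would isolate the consequence of the curvature bound that is actually used. Because $M$ is complete, simply connected with $K_M\le-1$, it is a Cartan--Hadamard manifold, so $r:=d(x,\cdot)$ is smooth on $M\setminus\{x\}$ and $\cosh r$ extends to a smooth function on all of $M$. By the Hessian comparison theorem -- this is the one place $K_M\le-1$ enters, equivalently a Rauch comparison for Jacobi fields along the radial geodesics issuing from $x$ -- one has $\operatorname{Hess}r\ge(\coth r)(g-dr\otimes dr)$ on $M\setminus\{x\}$, and a short computation upgrades this to $\operatorname{Hess}(\cosh r)\ge(\cosh r)\,g$ on all of $M$. Consequently, for $\phi(s):=\cosh r(\sigma(s))$, using that $\sigma$ is a unit-speed geodesic,
$$\phi''(s)\;=\;\operatorname{Hess}(\cosh r)\bigl(\sigma'(s),\sigma'(s)\bigr)\;\ge\;\cosh r(\sigma(s))\;=\;\phi(s),$$
so $\phi''\ge\phi$ on $[0,\rho_2]$. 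In the constant-curvature model the same computation is an identity: $\tilde\phi(s):=\cosh d(\tilde x,\tilde\sigma(s))$ satisfies $\tilde\phi''=\tilde\phi$.

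Next I would match boundary data and take the difference. Since corresponding sides are equal, $\phi(0)=\cosh\rho_1=\tilde\phi(0)$ and $\phi(\rho_2)=\cosh d(x,y)=\cosh d(\tilde x,\tilde y)=\tilde\phi(\rho_2)$; and the first variation of arc length gives $\phi'(0)=\sinh(\rho_1)\langle\nabla r|_p,\sigma'(0)\rangle=-\sinh(\rho_1)\cos\theta$, and likewise $\tilde\phi'(0)=-\sinh(\rho_1)\cos\tilde\theta$. Put $w:=\phi-\tilde\phi$, so that $w(0)=w(\rho_2)=0$ and $w''-w\ge0$ on $(0,\rho_2)$. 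The elementary maximum principle for the operator $u\mapsto u''-u$ -- a positive interior maximum of $w$ at some $s_0$ would give $w''(s_0)\le0<w(s_0)\le w''(s_0)$, absurd -- forces $w\le0$ on $[0,\rho_2]$; since $w(0)=0$ and $w\le0$ nearby, the right-hand derivative satisfies $w'(0)\le0$, i.e. $-\sinh(\rho_1)\cos\theta\le-\sinh(\rho_1)\cos\tilde\theta$. Dividing by $\sinh\rho_1>0$ yields $\cos\theta\ge\cos\tilde\theta$, hence $\theta\le\tilde\theta$; the cases $\rho_1=0$ or $\rho_2=0$ make the triangle degenerate and are trivial.

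The substantive input is the Hessian/Rauch comparison, packaged as $(\cosh r\circ\sigma)''\ge\cosh r\circ\sigma$ along geodesics; granting this, the remainder is a one-line Sturm-type maximum principle, so I do not anticipate a genuine obstacle. The only care needed is the routine one: that the model triangle $\triangle\tilde p\tilde x\tilde y$ exists at all (it does, since its prescribed side lengths are realized by an honest triangle in $M$ and hence satisfy the triangle inequalities), and the usual smoothness bookkeeping, which the global smoothness of $\cosh r$ on the Cartan--Hadamard manifold $M$ disposes of.
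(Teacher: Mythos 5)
The paper does not prove Theorem~\ref{Toponogov}: it is stated as well-known and cited to \cite{MR1333601}, so there is no in-paper argument to compare your proposal against. Your proof is correct, and it is the standard Sturm-comparison route to the hinge inequality. The only input from $K_M\le -1$ is the Hessian comparison $\operatorname{Hess}(\cosh r)\ge(\cosh r)\,g$, which you obtain from $\operatorname{Hess}\, r\ge(\coth r)(g-dr\otimes dr)$; this is precisely the paper's Theorem~\ref{Hessian} applied with $M_1=H(-1)$, $M_2=M$, the cut-locus hypothesis being vacuous on a Cartan--Hadamard manifold. Differentiating $\cosh r$ twice along the side $py$ gives $\phi''\ge\phi$, the model gives the identity $\tilde\phi''=\tilde\phi$, and with matched Dirichlet data $w(0)=w(\rho_2)=0$ the one-dimensional maximum principle for $w''-w\ge 0$ yields $w\le 0$, hence $w'(0)\le 0$, hence $\cos\theta\ge\cos\tilde\theta$ and $\theta\le\tilde\theta$. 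The smoothness of $\cosh r$ through $x$ (as a smooth function of $r^2$), the first-variation computation of $\phi'(0)$ with the correct sign convention for $\nabla r|_p$ versus the direction of $px$, and the existence of the model triangle are all handled correctly; I see no gap.
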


In this proof we assume that all geodesics are parameterized by arc length. 

Two geodesic rays $\gamma_1$ and $\gamma_2$  are said to be equivalent, denoted by $\gamma_1 \sim \gamma_2$ if there exists a constant $C$ such that for any $t \geq 0$ we have
$$d(\gamma_1(t),\gamma_2(t)) \leq C.$$

Define $S(\infty)$, the sphere at infinity, to be
$$S(\infty)= \text{the set of all geodesic rays}/ \sim.$$ 

Let $S_p$ denote the unit sphere in $T_p(M)$. Given $\omega \in S_p$, there exists a unique geodesic ray $\gamma:[0,\infty) \to M$ satisfying $\gamma(0)=p$ and $\gamma'(0)=\omega$. Two geodesic rays $\gamma_1$ and $\gamma_2$ starting from $p$ are equivalent  if and only if $\gamma_1=\gamma_2$. At the same time each equivalence class contains a representative emanating from $p$. Thus $S(\infty)$ can be identified with $S_p$ for each $p \in M$.

Now we can define the cone $C_p(\omega, \delta)$ around $\omega$ of angle $\delta$ by
$$C_p(\omega, \delta)=\{x \in M:\angle(\omega,\gamma'_{px}(0))<\delta\},$$
where $\gamma_{px}$ denotes the geodesic ray starting from $p$ that passes through $x$. We call
$$T_p(\omega,\delta,R)=C_p(\omega, \delta) \setminus B_p(R)$$
a truncated cone of radius $R$. We denote $M \cup S(\infty)$ by $\overline{M}$. Then the set of $T_p(\omega,\delta,R)$ for all $\omega \in S_p$, $\delta$ and $R>0$ and $B_q(r)$ for all $q \in M$ and $r>0$ form a basis of a topology on $\overline{M}$, which is called the cone topology. This topology makes $\overline{M}$ a compactification of $M$ \cite{MR1333601}.

\emph{Remark 2.1} The cone topology on $\overline{M}$ is independent of the choice of $p$.

\emph{Remark 2.2} Anderson and Schoen showed that if $-b^2 \leq K_M \leq -a^2<0$, then the topological structure is $C^\alpha$, where $\alpha=a/b$.\\

From now on we identify $S(\infty)$ with $S_p$ and its image under the exponential map $\mathrm{exp}_p(S_p)$. Let $(r,\theta)$ be the normal polar coordinates at $p$. Then $\varphi \in C^0(S(\infty))$ can be written as $\varphi=\varphi(\theta)$. Assume that Theorem \ref{main} is true for all $\varphi \in C^\infty(S_p)$. Given $\varphi \in C^0(S_p)$, let $\varphi_n \in  C^\infty(S_p)$ be a sequence of functions such that $\varphi_n \to \varphi$ uniformly. Then there exists a sequence of harmonic functions $u_n \in C^\infty(M) \cap C^0(\overline{M})$ satisfying $u_n(r,\theta)\to\varphi_n(\theta)$ as $r \to \infty$. By the maximum principle $u_n \to u$ uniformly on $\overline{M}$ and $u|_{S(\infty)}=\varphi$. This shows that without loss of generality, we may assume $\varphi \in C^\infty(S_p)$.

Extend $\varphi$ to $M \setminus \{p\}$ by defining
$$\varphi(r,\theta)=\varphi(\theta)$$
for $r>0$. We still use the letter $\varphi$ to denote the extended function. Then $\varphi $ is smooth and bounded on $M \setminus \{p\}$.

Let
$$
\mathrm{osc}_{B_x(d)} \varphi=\sup_{y \in B_x(d)} |\varphi(y)-\varphi(x)|$$
be the oscillation of $\varphi$ in the geodesic ball $B_x(d)$.

Since $\varphi \in C^\infty(S_p)$, it is Lipschitz continuous on $S_p$. We have for $y \in B_x(d)$,
\begin{equation}
|\varphi(y)-\varphi(x)|=|\varphi(\theta')-\varphi(\theta)| \leq C|\theta'-\theta| = C \angle({px},{py}), \label{1}
\end{equation}
where $\theta$, $\theta'$ are the spherical coordinates of $x$ and $y$ respectively.

Now it is necessary to estimate the  angle $\angle({px},{py})$.

\begin{lemma}
\label{hyperbolic}
Let $M$ be a complete, simply connected Riemannian manifold with sectional curvature $K_M \leq -1$, and let $p,x,y$ be three points in $M$. Suppose that $d(p,x)=s$, and $y \in B_x(d)$ with $d<s$. We have
$$\angle(px,py)<\frac{2d}{e^{s-d}-1}.$$
\end{lemma}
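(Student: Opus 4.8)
The plan is to compare the geodesic triangle $\triangle pxy$ in $M$ with a corresponding triangle in the hyperbolic plane $H(-1)$ via Toponogov's comparison theorem (Theorem \ref{Toponogov}). Since $K_M \leq -1$, the angle $\angle(px,py)$ at $p$ is bounded above by the corresponding angle $\angle(\tilde p\tilde x,\tilde p\tilde y)$ in $H(-1)$, where $\triangle\tilde p\tilde x\tilde y$ has side lengths $d(\tilde p,\tilde x)=d(p,x)=s$, $d(\tilde x,\tilde y)=d(x,y)=:t\leq d$, and $d(\tilde p,\tilde y)=d(p,y)$. So it suffices to prove the angle estimate purely in $H(-1)$: if a hyperbolic triangle has one vertex $\tilde p$ with opposite side of length $t\leq d$ and the adjacent side $\tilde p\tilde x$ of length $s$, then the angle at $\tilde p$ is less than $2d/(e^{s-d}-1)$.

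For the hyperbolic computation I would use the hyperbolic law of sines (or the law of cosines) in $H(-1)$. Writing $\alpha=\angle(\tilde p\tilde x,\tilde p\tilde y)$, the side opposite $\tilde p$ has length $t$, and the law of sines gives $\sin\alpha = \sinh t \,\sin(\angle \tilde x)/\sinh(d(\tilde p,\tilde y))$. Since $\alpha$ is small it is cleaner to estimate directly: $\alpha \leq \sin\alpha$ is false in general, but $\alpha$ being an angle of a triangle with a very short opposite side is indeed small, and one has $\alpha \leq \tan\alpha$ is also not what we want — instead I would bound $\alpha$ using that the shortest distance from $\tilde p$ to the opposite geodesic is at least $s-t \geq s-d$ (by the triangle inequality both $d(\tilde p,\tilde x)$ and $d(\tilde p,\tilde y)$ are at least $s-t$, hence $\geq s-d$), and then use the elementary hyperbolic fact that a geodesic segment of length $t$ lying entirely outside the ball $B_{\tilde p}(s-d)$ subtends at $\tilde p$ an angle at most (arc length)/(sinh of the radius), giving $\alpha \leq t/\sinh(s-d) \leq d/\sinh(s-d)$. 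Finally $\sinh(s-d) = \tfrac12(e^{s-d}-e^{-(s-d)}) \geq \tfrac12(e^{s-d}-1)$ for $s-d\geq 0$, which yields $\alpha < 2d/(e^{s-d}-1)$ as claimed.

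I expect the main technical point to be justifying the inequality $\alpha \leq (\text{length of opposite side})/\sinh(\text{distance from }\tilde p)$ cleanly. The honest way is to parametrize the opposite geodesic segment $\sigma(u)$, $0\leq u\leq t$, note $d(\tilde p,\sigma(u))\geq s-d$ for all $u$, and compute the angle swept as $\alpha = \int_0^t \frac{|\text{angular component of }\sigma'(u)|}{?}\,du$; in geodesic polar coordinates $(\rho,\phi)$ centered at $\tilde p$ the hyperbolic metric is $d\rho^2 + \sinh^2\rho\, d\phi^2$, so $|d\phi/du| \le |\sigma'(u)|/\sinh\rho(u) = 1/\sinh\rho(u) \le 1/\sinh(s-d)$, and integrating gives $|\Delta\phi| = \alpha \le t/\sinh(s-d)$. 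This is the one spot requiring a short but genuine argument; everything else (Toponogov, the triangle inequality bound $\rho \ge s-d$, and the $\sinh$ estimate) is routine. One should also note the hypothesis $d<s$ guarantees $s-d>0$ so that $e^{s-d}-1>0$ and the bound is meaningful.
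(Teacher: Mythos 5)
Your argument is correct and is essentially the paper's proof in different coordinates: both pass to $H(-1)$ via Toponogov and then bound the angle at $\tilde p$ by exploiting that the opposite side $\tilde x\tilde y$ lies entirely outside $B_{\tilde p}(s-d)$ (your triangle-inequality observation $d(\tilde p,\sigma(u))\ge s-t>s-d$ is exactly the paper's remark that $B_{\tilde x}(d')$ lies outside $B_{\tilde p}(s-d)$). The paper executes the arc-length estimate in the Poincar\'e disk model, comparing the Euclidean lengths $l_E$ and radius $d_E(\tilde p,\tilde{\tilde x})=(e^{s-d}-1)/(e^{s-d}+1)$, whereas you work intrinsically in geodesic polar coordinates and integrate $|\phi'(u)|\le 1/\sinh\rho(u)\le 1/\sinh(s-d)$; both routes produce the same intermediate bound, namely $\alpha< d/\sinh(s-d)=2de^{s-d}/(e^{2(s-d)}-1)$, from which $\sinh(s-d)>\tfrac{1}{2}(e^{s-d}-1)$ gives the stated inequality. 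Your formulation is a bit cleaner because it avoids translating between hyperbolic and Euclidean quantities in the disk model, and it makes transparent that the bound is really $d/\sinh(s-d)$.
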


The proof is based on a computation in the hyperbolic plane and the Topogonov comparison theorem. This lemma is similar to that in \cite{MR1069289}. For completeness, we include the proof here.

Let $\triangle\tilde{p}\tilde{x}\tilde{y}$ be the corresponding geodesic triangle in $H(-1)$ such that $d(\tilde{p},\tilde{x})=d(p,x)=s$, $d(\tilde{x},\tilde{y})=d(x,y)=d' < d<s$ and $d(\tilde{p},\tilde{y})=d(p,y)$.  We use the Poincare disk model to compute $\angle(\tilde{p}\tilde{x},\tilde{p}\tilde{y})$ in the unit Euclidean ball $B^2$ with metric
\begin{equation}
\label{metric}
\mathrm{d}s^2_H=4 \frac{dr^2+r^2d\phi^2}{(1-r^2)^2},
\end{equation}
where $(r,\phi)$ are the polar coordinates of $B^2$.

Without loss of generality, we may assume that $\tilde{p}$ is the center of $B^n$. Let $\tilde{\tilde{x}}$ be the intersection of  the geodesic sphere $S_{\tilde{x}}(d)$ and the line segment $\tilde{p}\tilde{x}$. Then $d_H(\tilde{p},\tilde{\tilde{x}})=s-d$. From  \eqref{metric} we can easily compute the Euclidean distance between $\tilde{p}$ and $\tilde{x}$:
\begin{equation*}
d_E(\tilde{p},\tilde{\tilde{x}})=\frac{e^{s-d}-1}{e^{s-d}+1}.
\end{equation*}

Let $\tilde{\tilde{y}}$ be the intersection of the geodesic sphere $S_{\tilde{p}}(s-d)$ and the line segment $\tilde{p}\tilde{y}$. Denote by $\mathrm{arc}(\tilde{\tilde{x}},\tilde{\tilde{y}})$ the circular arc joining $\tilde{\tilde{x}}$ and $\tilde{\tilde{y}}$, $l_E$ and $l_H$ the lengths of curves in Euclidean and hyperbolic metrics respectively. We have $l_H(\mathrm{arc}(\tilde{\tilde{x}},\tilde{\tilde{y}}))<d_H(\tilde{x},\tilde{y})=d'$. In fact, let $\gamma_1(\phi)=(d_E(\tilde{p},\tilde{\tilde{x}}),\phi)$ and $\gamma_2(\phi)=(r(\phi),\phi)$ be the parameterization of $\mathrm{arc}(\tilde{\tilde{x}},\tilde{\tilde{y}})$ and the geodesic segment $\tilde{x}\tilde{y}$ respectively. We have
$$|\gamma'_1(\phi)|= \frac{2 d_E(\tilde{p},\tilde{\tilde{x}})}{1-(d_E(\tilde{p},\tilde{\tilde{x}}))^2}.$$

Also
$$|\gamma'_2(\phi)|=2 \frac{\sqrt{r'^2(\phi)+r^2(\phi)}}{1-r^2(\phi)} \geq \frac{2r(\phi)}{1-r^2(\phi)}.$$

We have $r(\phi) > d_E(\tilde{p},\tilde{\tilde{x}})$ for all $\phi$ since the geodesic ball $B_{\tilde{x}}(d')$ lies completely outside $B_{\tilde{p}}(s-d)$, which implies $|\gamma'_2(\phi)| > |\gamma'_1(\phi)|$ and thus $l_H(\mathrm{arc}(\tilde{\tilde{x}},\tilde{\tilde{y}})) < d(\tilde{x},\tilde{y})=d'<d$. By \eqref{metric} again we have
\begin{eqnarray*}
l_E(\mathrm{arc}(\tilde{\tilde{x}},\tilde{\tilde{y}})) &\leq& \frac{1}{2} \cdot (1-(d_E(\tilde{p},\tilde{\tilde{x}}))^2) \cdot l_H(\mathrm{arc}(\tilde{\tilde{x}},\tilde{\tilde{y}})) \\
&<& \frac{d}{2} \cdot (1-(d_E(\tilde{p},\tilde{\tilde{x}}))^2).
\end{eqnarray*}

Then 
\begin{eqnarray*}
\angle(\tilde{p}\tilde{x},\tilde{p}\tilde{y})=\angle(\tilde{p}\tilde{\tilde{x}},\tilde{p}\tilde{\tilde{y}}) &=& \frac{l_E(\mathrm{arc}(\tilde{\tilde{x}},\tilde{\tilde{y}}))}{d_E(\tilde{p},\tilde{\tilde{x}})} \\
&<& \frac{d}{2} \cdot \frac{1-(d_E(\tilde{p},\tilde{\tilde{x}}))^2}{d_E(\tilde{p},\tilde{\tilde{x}})} \\
&<& \frac{2d}{e^{s-d}-1}.
\end{eqnarray*}

By Theorem \ref{Toponogov} we have $\angle(px,py)\leq\angle(\tilde{p}\tilde{x},\tilde{p}\tilde{y})<\dfrac{2d}{e^{s-d}-1}$. Lemma \ref{hyperbolic} is proved.\\

\section{Proof of Theorem \ref{main}}

Throughout this section we assume $M$ is a complete, simply connected n-dimensional Riemannian manifold with sectional curvature bounded from above by $-1$ and satisfies
$$-Ce^{(2-2\delta)r(x)} \leq K_M(x)$$
outside a compact subset of $M$ for some $\delta>0$.

\emph{Remark 1} Without loss of generality, we may assume $-Ce^{(2-2\delta)r(x)} \leq K_M(x) \leq -1$ for some large enough constant $C$ on the whole manifold.

\emph{Remark 2}  The factor $2$ before $\delta$ is just for notational convenience.\\

We follow Anderson and Schoen's argument. 

Let $$d(x) =e^{-(1-\delta)r(x)}.$$

We estimate the oscillation of $\varphi$ in the geodesic ball $B_x(d(x))$.
Combining equation \eqref{1} and Lemma \ref{hyperbolic} we see easily that 
\begin{equation}
\label{osc}
\mathrm{osc}_{B_x(d(x))} \varphi=\mathrm{O}(e^{-(2-\delta)r(x))}).
\end{equation}

Now we take the average $\bar{\varphi}$ of $\varphi$ in the ball $B_x(d(x))$ in the following way. Let $\chi \in C_0^\infty(\mathbb{R})$ be a function satisfying $0\leq \chi \leq 1$, $\chi(t)=0$ for $|t|\geq1$ and $\chi(t)=1$ for $|t| \leq 1/4$. Let $$u(x,y)=\chi(e^{(2(1-\delta)r(x)} \rho_x^2(y)),$$ where $\rho_x=d(x,\cdot)$. We have
\begin{equation}
\label{u}
u(x,y)=
\begin{cases}
1 & \text{if } y \in\overline{ B_x(d(x)/2)}\\
0 & \text{if } y \in M \setminus B_x(d(x)).
\end{cases}
\end{equation}

Now define
$$\overline{\varphi}(x)=\dfrac{\int_M u(x,y) \varphi(y) dy}{\int_M u(x,y) dy}.$$

Since $\varphi$ is continuous and bounded on $M \setminus\{p\}$, $\overline{\varphi}$ is smooth on $M$. Then we have 
\begin{eqnarray*}
|\overline{\varphi}(x)-\varphi(x)| &=& \frac{\int_{B_x(d(x))} u(x,y) (\varphi(y)-\varphi(x)) dy}{\int_{B_x(d(x))} u(x,y) dy} \\
&\leq &\sup_{y \in B_x(d(x))} |\varphi(y)-\varphi(x)| \\
&= & \mathrm{osc}_{B_x(d(x))} \varphi \\
& =& \mathrm{O}(e^{-(2-\delta)r(x))}),
\end{eqnarray*}
which implies $\overline{\varphi}$ and $\varphi$ have the same value on $S(\infty)$.

Let $$v(x)=\int_M u(x,y) dy,$$ 
it follows from \eqref{u} that $\mathrm{Vol}(B_x(d(x)/2) )\leq v(x) \leq \mathrm{Vol}\left(B_x\left(d\left(x\right)\right)\right)$. 

In the following we will simply write $d$ for $d(x)$, $u$ for $u(x,y)$, $\rho$ for $\rho_x(y)$ and $v$ for $v(x)$ and the operations $\nabla$ and $\Delta$ will always be with respect to $x$. We have
\begin{eqnarray}
\Delta \overline{\varphi}(x_0) &=& \Delta\left(\overline{\varphi}\left(x\right)-{\varphi}\left(x_0\right)\right) |_{x=x_0} \\
&=&\nonumber  \int_M \Delta( \frac{u}{v})(\varphi(y)-\varphi(x_0)) dy  |_{x=x_0}.
\end{eqnarray}

Direct computation gives
\begin{equation}
\label{u/v}
\Delta\left(\frac{u}{v}\right)=\frac{v \Delta u-2\nabla u \cdot \nabla v -u \Delta v}{v^2}+\frac{2u}{v^3}|\nabla v|^2.
\end{equation}

Since $r$ and $\rho$ are both distance functions, we have $|\nabla r|=|\nabla \rho|=1$. Together with the fact that $\mathrm{supp} \,u \subset \overline{B_x(d(x))}$, we have

\begin{eqnarray}
\label{nabla u}
\nabla u &=& \chi'(e^{(2(1-\delta)r(x)}\rho_x^2(y)) \cdot \left( e^{2\left(1-\delta\right)r}\left(\left(2-2\delta\right)\rho^2  \nabla r+2 \rho \nabla \rho\right)\right)\\
&=& \nonumber \mathrm{O}(e^{(1-\delta)r}),
\end{eqnarray}
here we used $\rho=\mathrm{O}(e^{-(1-\delta)r})$.
\begin{multline}
\label{laplacian u}
\Delta u = \chi ''(e^{(2(1-\delta)r(x)}\rho_x^2(y)) \cdot \left ( e^{2\left(1-\delta \right)r}\left(\left(2-2\delta\right)\rho^2  \nabla r+2 \rho \nabla \rho \right)\right)^2
+\chi'(e^{(2(1-\delta)r(x)}) \\ \cdot \left(e^{2\left(1-\delta\right)r}\left(\left(2-2\delta\right)^2 \rho^2 |\nabla r|^2 \right. \right. 
+4\left. \left. \left(2-2\delta\right)\rho \nabla r \cdot \nabla \rho+\left(2-2\delta\right) \rho^2 \Delta r + 2|\nabla \rho|^2 +2 \rho \Delta\rho\right)\right).
\end{multline}

We need the following Hessian comparison theorem from \cite{MR1333601} to estimate $\Delta r$ and $\Delta \rho$.

\begin{theorem}
\label{Hessian}
Let $M_1$ and $M_2$ be two $n$-dimensional complete Riemannian manifolds. Assume that $\gamma_i:[0,a] \to M_i (i=1,2)$ are two geodesics parametrized by arc length, and $\gamma_i$ does not intersect the cut locus of $\gamma_i(0)$ for $i=1,2$. Let $r_i$ be the distance function from $\gamma_i(0)$ on $M_i$ and let $K_i$ be the sectional curvature of $M_i$. Assume that at $\gamma_1(t)$ and $\gamma_2(t)$, $0\leq t \leq a$, we have
$$K_1(X_1, \frac{\partial}{\partial \gamma_1}) \geq K_2(X_2, \frac{\partial}{\partial \gamma_2}) ,$$
where $X_i$ is any unit vector in $T_{\gamma_i(t)}M_i$ perpendicular to $\dfrac{\partial}{\partial \gamma_i}$. Denote by $H(r_i)$ the Hessian of $r_i$, then
$$H(r_1)(X_1,X_1)\leq H(r_2)(X_2,X_2),$$
where $X_i \in T_{\gamma_i(a)}M_i$ with $\langle X_i,\dfrac{\partial}{\partial \gamma_i} \rangle(\gamma_i(a))=0$ and $|X_i|=1$.
\end{theorem}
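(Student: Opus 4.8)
The plan is to follow the standard route for results of this type: convert the statement about $H(r_i)$ into one about Jacobi fields and the index form, and then compare. First I would record what $H(r_i)$ means along the geodesic. Fix $q_i=\gamma_i(a)$, which by hypothesis lies off the cut locus of $\gamma_i(0)$, so $a$ is strictly less than the first conjugate time along $\gamma_i$ (the cut point occurs no later than the first conjugate point), and $\nabla r_i$ is smooth near $\gamma_i|_{(0,a]}$. For a unit vector $X_i\perp\partial/\partial\gamma_i$ at $q_i$, let $J_i$ be the unique Jacobi field along $\gamma_i|_{[0,a]}$ with $J_i(0)=0$, $J_i(a)=X_i$. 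Since $\nabla r_i$ is the velocity field of the radial geodesics from $\gamma_i(0)$ and $J_i$ vanishes there, $J_i$ is the variation field of a geodesic variation through $\gamma_i(0)$, so $\nabla_{J_i}\nabla r_i=\nabla_{\gamma_i'}J_i=J_i'$ along $\gamma_i$; hence $H(r_i)(X_i,X_i)=\langle J_i'(a),X_i\rangle$. Integrating the Jacobi equation by parts turns this into $H(r_i)(X_i,X_i)=I^{(i)}_a(J_i,J_i)$, where $I^{(i)}_a(V,V)=\int_0^a\big(|\nabla_{\gamma_i'}V|^2-\langle R^{(i)}(V,\gamma_i')\gamma_i',V\rangle\big)\,dt$ is the index form; and since there are no conjugate points on $[0,a]$, $J_i$ minimizes $I^{(i)}_a$ over all (piecewise smooth) vector fields along $\gamma_i$ with the same endpoint values.

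The comparison step is then to transplant the Jacobi field from $M_2$ into $M_1$. Pick parallel orthonormal frames $\{e_k(t)\}$ along $\gamma_1$ and $\{f_k(t)\}$ along $\gamma_2$ with $e_n=\gamma_1'$, $f_n=\gamma_2'$, and use them to identify the two normal bundles, so that in particular the given $X_1$ and $X_2$ acquire the same coordinate vector $(c_1,\dots,c_{n-1},0)$. Writing $J_2=\sum_k h_k f_k$, put $V_1=\sum_k h_k e_k$: then $V_1(0)=0$, $V_1(a)=X_1$, $|V_1(t)|=|J_2(t)|$ and $|\nabla_{\gamma_1'}V_1|(t)=|\nabla_{\gamma_2'}J_2|(t)$. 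Because $V_1\perp\gamma_1'$ and $J_2\perp\gamma_2'$, the curvature terms are pure sectional curvatures, so the hypothesis applied to the unit vectors $V_1/|V_1|$ and $J_2/|J_2|$ gives $\langle R^{(1)}(V_1,\gamma_1')\gamma_1',V_1\rangle=K_1(V_1,\gamma_1')|V_1|^2\ge K_2(J_2,\gamma_2')|J_2|^2=\langle R^{(2)}(J_2,\gamma_2')\gamma_2',J_2\rangle$ pointwise in $t$. Since the $|\nabla V|^2$ integrands agree, this yields $I^{(1)}_a(V_1,V_1)\le I^{(2)}_a(J_2,J_2)$, and then the minimizing property from the first paragraph gives $H(r_1)(X_1,X_1)=I^{(1)}_a(J_1,J_1)\le I^{(1)}_a(V_1,V_1)\le I^{(2)}_a(J_2,J_2)=H(r_2)(X_2,X_2)$, which is the assertion.

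I expect the obstacles to be technical rather than conceptual. One is making the index-form theory legitimate down to $t=0$, which is exactly why the hypothesis that $\gamma_i$ avoids the cut locus is needed: it keeps $a$ below the first conjugate time, so $I^{(i)}_a$ is positive semidefinite on fields vanishing at $0$ and $J_i$ really is its minimizer. The more essential point — and the reason the theorem only assumes a bound on \emph{sectional} curvatures — is that one must avoid needing any control of the full curvature operators $W\mapsto R^{(i)}(W,\gamma_i')\gamma_i'$; this works only because the transplanted fields stay orthogonal to the geodesics, so $\langle R(W,\gamma')\gamma',W\rangle=K(W,\gamma')|W|^2$ with no off-diagonal contribution, and the stated pointwise sectional-curvature inequality is then precisely enough. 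An alternative would be the matrix Riccati comparison for the shape operators $U_i$ of the distance spheres, $U_i'+U_i^2+R^{(i)}_{\gamma_i}=0$ with $U_i(t)\sim t^{-1}\,\mathrm{Id}$ as $t\to 0^{+}$, but that formulation wants $R^{(1)}_{\gamma_1}\ge R^{(2)}_{\gamma_2}$ as operators in matched frames, which is slightly stronger than the hypothesis here, so I would write up the index-form version.
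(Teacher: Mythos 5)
The paper states this theorem without proof, citing \cite{MR1333601}; it is the classical Hessian comparison theorem. Your index-form argument --- rewriting $H(r_i)(X_i,X_i)=\langle J_i'(a),X_i\rangle=I^{(i)}_a(J_i,J_i)$ for the Jacobi field $J_i$ with $J_i(0)=0$, $J_i(a)=X_i$, transplanting $J_2$ to $\gamma_1$ via parallel orthonormal frames so that only sectional curvatures (not the full curvature operator) enter the comparison, and then invoking the index lemma, valid since the cut-locus hypothesis excludes conjugate points on $[0,a]$ --- is the standard proof and is correct.
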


Since $\Delta r$ is the trace of $H(r)$, we have the following corollary.
\begin{corollary}
\label{Laplacian}
Let $M$ be an $n$-dimensional complete Riemannian manifold. If the sectional curvature satisfies $-k^2 \leq K_M(x) \leq -1$ in the geodesic ball $\overline{B_p(R)}$, then
$$(n-1)\,\mathrm{ coth }\, r \leq \Delta r \leq (n-1)k\, \mathrm{ coth }\, kr$$
for $r\leq R$. In addition, 
$$n-1 \leq \Delta r \leq (n-1)(k+\frac{1}{r})$$
for $r \leq R$.
\end{corollary}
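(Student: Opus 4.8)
The plan is to obtain the two-sided bound on $\Delta r$ by applying the Hessian comparison theorem (Theorem \ref{Hessian}) twice, comparing $M$ against the two model space forms $H(-1)$ and $H(-k^2)$, and then passing from the Hessian to its trace. First I would record that, since $r$ is a distance function, $|\nabla r|=1$ and hence $H(r)(\nabla r,\cdot)\equiv 0$; choosing an orthonormal frame $e_1,\dots,e_{n-1},e_n=\nabla r$ along a minimizing geodesic from $p$, we get $\Delta r=\sum_{i=1}^{n-1}H(r)(e_i,e_i)$, a sum of $n-1$ terms over unit vectors perpendicular to the radial direction. In the model $H(-a^2)$ of constant curvature $-a^2$ one computes directly that $H(r)(X,X)=a\coth(ar)$ for every such $X$, so $\Delta r=(n-1)\,a\coth(ar)$ there.

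For the lower bound on $\Delta r$ in $M$ I would apply Theorem \ref{Hessian} with $M_1=H(-1)$ and $M_2=M$: the hypothesis $K_M(x)\le -1$ gives $K_1\equiv -1\ge K_2$, hence $H(r_1)(X_1,X_1)\le H(r_2)(X_2,X_2)$ for corresponding unit vectors perpendicular to the radial direction; tracing over an orthonormal frame yields $(n-1)\coth r\le \Delta r$. For the upper bound I would instead take $M_1=M$ and $M_2=H(-k^2)$: the hypothesis $-k^2\le K_M(x)$ gives $K_1\ge K_2\equiv -k^2$, hence $H(r)(X,X)\le H(r_2)(X_2,X_2)$, and tracing gives $\Delta r\le (n-1)k\coth(kr)$. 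This proves the first displayed inequality for $0<r\le R$. Since the manifolds under consideration have $\exp_p$ a diffeomorphism, there is no cut locus and $r$ is smooth on $B_p(R)\setminus\{p\}$, so no barrier argument is needed; for a general complete $M$ one would invoke Calabi's support-function trick at cut points.

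The ``in addition'' part is then elementary calculus: $\coth t\ge 1$ for all $t>0$ gives $(n-1)\coth r\ge n-1$, and $\coth t\le 1+\tfrac1t$ for $t>0$ (equivalently $e^{2t}-1\ge 2t$) gives $k\coth(kr)\le k\bigl(1+\tfrac1{kr}\bigr)=k+\tfrac1r$, whence $\Delta r\le(n-1)(k+\tfrac1r)$. I do not anticipate a genuine obstacle here; the only points requiring care are bookkeeping the direction of the inequality in Theorem \ref{Hessian} (larger curvature forces smaller Hessian, so the curvature lower bound controls $\Delta r$ from above) and remembering to trace only over the $(n-1)$-dimensional subspace orthogonal to $\nabla r$, since the radial contribution to $\Delta r$ vanishes.
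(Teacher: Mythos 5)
Your proof is correct and is essentially the expansion the paper has in mind: the paper gives the corollary with only the one-line justification ``Since $\Delta r$ is the trace of $H(r)$,'' and you supply the expected details — apply Theorem \ref{Hessian} twice (once with $M_1=H(-1)$, $M_2=M$, once with $M_1=M$, $M_2=H(-k^2)$), use the model computation $H(r)(X,X)=a\coth(ar)$, trace over the $(n-1)$ directions orthogonal to $\nabla r$, and finish with $1\le\coth t\le 1+\tfrac1t$. You have the inequality direction right, and your remark about the absence of a cut locus (so that no Calabi barrier argument is needed) correctly identifies the only potential subtlety, which is moot in the paper's Cartan–Hadamard setting.
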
 

Since $-C e^{(2-2\delta)r(x)} \leq K_M(x)\leq -1$ in $B_p(r(x))$. By Corollary \ref{Laplacian} we have $n-1 \leq \Delta r(x) \leq (n-1)(1+C^{1/2} e^{(1-\delta)r(x)})$ for $r(x) \geq 1$. So 
\begin{equation}
\label{delta r}
\Delta r=\mathrm{O}(e^{(1-\delta)r}).
\end{equation} 

Since $-C e^{(2-2\delta)(r(x)+1)} \leq K_M(x)\leq -1$ for $x \in B_x(d(x)) \subset B_p(r(x)+1)$. By Corollary \ref{Laplacian}  again we have 
\begin{equation}
\label{delta rho}
\rho \Delta \rho \leq (n-1)(1+C e^{(1-\delta)(r+1)}\rho)=\mathrm{O}(1)
\end{equation}
when $d(x)/2 \leq \rho \leq d(x)$. Apply \eqref{delta r} and \eqref{delta rho} in \eqref{laplacian u} and use the fact that $\mathrm{supp}\, \Delta u \subset \overline{B_x(d(x))} \setminus B_x(d(x)/2)$ and $|\nabla r|=|\nabla \rho|=1$ we see that
\begin{equation}
\label{Delta u}
\Delta u=\mathrm{O}(e^{2(1-\delta)r}).
\end{equation}

To estimate $\nabla v$ we have
\begin{eqnarray*}
|\nabla v| &=& |\nabla \int_M u dy| \\
                &\leq& \int_M |\nabla u| dy \\
                &=& \int_{B_x(d(x))} |\nabla u| dy,
\end{eqnarray*}
thus 
\begin{equation}
\label{nabla v}
|\nabla v|=\mathrm{Vol}(B_x(d(x)) \cdot \mathrm{O}(e^{(1-\delta)r}).
\end{equation}

We also have
\begin{eqnarray*}
\label{Delta v}
|\Delta v| &=& |\Delta \int_M u dy| \\
                &\leq& \int_M |\Delta u| dy \\
                &=& \int_{B_x(d(x))} |\Delta u| dy,
\end{eqnarray*}
thus 
\begin{equation}
|\Delta v|=\mathrm{Vol}(B_x(d(x)) \cdot \mathrm{O}(e^{2(1-\delta)r}).
\end{equation}

Combining \eqref{nabla u}, \eqref{Delta u}, \eqref{nabla v} and \eqref{Delta v}, we have the following lemma.

\begin{lemma}
\label{laplacian uv}
\begin{equation}
\Delta (\frac{u}{v})=\left(\frac{1}{\mathrm{Vol}\left(B_x\left(d/2\right)\right)}+\frac{\mathrm{Vol}(B_x(d))}{\left(\mathrm{Vol}\left(B_x\left(d/2\right)\right)\right)^2}+\frac{(\mathrm{Vol}(B_x(d)))^2}{(\mathrm{Vol}(B_x(d/2)))^3}\right)  \cdot \mathrm{O}(e^{2(1-\delta)r}).
\end{equation}
\end{lemma}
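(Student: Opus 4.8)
The plan is to substitute the four pointwise estimates \eqref{nabla u}, \eqref{Delta u}, \eqref{nabla v} and \eqref{Delta v} directly into the identity
\[
\Delta\!\left(\frac{u}{v}\right)=\frac{v\,\Delta u-2\,\nabla u\cdot\nabla v-u\,\Delta v}{v^{2}}+\frac{2u}{v^{3}}\,|\nabla v|^{2},
\]
and then to use the volume bounds $\mathrm{Vol}(B_x(d/2))\le v\le \mathrm{Vol}(B_x(d))$ together with $0\le u\le 1$ to replace the powers of $v$ appearing in the denominators by the corresponding powers of $\mathrm{Vol}(B_x(d/2))$.

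Concretely, I would bound the four resulting terms one at a time. First, $v\,\Delta u/v^{2}=\Delta u/v$, and since $|\Delta u|=\mathrm{O}(e^{2(1-\delta)r})$ by \eqref{Delta u} while $v\ge\mathrm{Vol}(B_x(d/2))$, this contributes a term of the form $\mathrm{O}(e^{2(1-\delta)r})/\mathrm{Vol}(B_x(d/2))$, which is the first summand. Second, combining \eqref{nabla u} and \eqref{nabla v} gives $|\nabla u\cdot\nabla v|\le|\nabla u|\,|\nabla v|=\mathrm{Vol}(B_x(d))\cdot\mathrm{O}(e^{2(1-\delta)r})$, and dividing by $v^{2}\ge(\mathrm{Vol}(B_x(d/2)))^{2}$ produces a contribution of the form $\mathrm{Vol}(B_x(d))\cdot\mathrm{O}(e^{2(1-\delta)r})/(\mathrm{Vol}(B_x(d/2)))^{2}$. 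Third, using $0\le u\le1$ and $|\Delta v|=\mathrm{Vol}(B_x(d))\cdot\mathrm{O}(e^{2(1-\delta)r})$ from \eqref{Delta v}, the term $u\,\Delta v/v^{2}$ is of the same form as the second one. Fourth, using $0\le u\le1$ and $|\nabla v|^{2}=(\mathrm{Vol}(B_x(d)))^{2}\cdot\mathrm{O}(e^{2(1-\delta)r})$, the term $2u|\nabla v|^{2}/v^{3}$ is bounded by $(\mathrm{Vol}(B_x(d)))^{2}\cdot\mathrm{O}(e^{2(1-\delta)r})/(\mathrm{Vol}(B_x(d/2)))^{3}$. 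Adding the four contributions, with the second and third collapsing into a single summand and the numerical constants absorbed into the $\mathrm{O}$, gives precisely the asserted formula.

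There is no genuine obstacle here; the computation is bookkeeping, and the only point to keep straight is which terms acquire a volume factor. The reason is that $\nabla u$ and $\Delta u$ are supported in the annulus $\overline{B_x(d)}\setminus B_x(d/2)$, so when they are integrated in $y$ to form $\nabla v=\int_M\nabla_x u\,dy$ and $\Delta v=\int_M\Delta_x u\,dy$ each gains a factor bounded by $\mathrm{Vol}(B_x(d))$, whereas the occurrence of $\Delta u$ in the first term of the identity is not integrated and hence carries no such factor. Differentiation under the integral sign is legitimate since for each fixed $x$ the function $u(x,\cdot)$ is smooth with compact support and the supports vary locally boundedly with $x$, and all the $\mathrm{O}$-estimates in \eqref{nabla u}--\eqref{Delta v} are uniform for $r(x)$ bounded away from $0$ (say $r(x)\ge1$), which is enough because on any fixed compact set every quantity involved is smooth and bounded.
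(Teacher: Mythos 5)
Your proposal is correct and follows the paper's implicit proof exactly: the paper itself justifies the lemma simply by ``Combining \eqref{nabla u}, \eqref{Delta u}, \eqref{nabla v} and \eqref{Delta v},'' and your term-by-term substitution into the displayed identity for $\Delta(u/v)$, together with the bounds $\mathrm{Vol}(B_x(d/2))\le v\le\mathrm{Vol}(B_x(d))$ and $0\le u\le 1$, is precisely that combination carried out explicitly.
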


To estimate $\Delta \overline{\varphi}(x)$, we need the following corollary of Bishop volume comparison theorem \cite{MR2243772}.

\begin{corollary}
\label{volume}
Let $M$ be a complete Riemannian manifold, and $c>0$ a constant. If $K_M(x) \geq -c^2k^2$  on $B_p(1)$ for some $k\geq1$. Then $\dfrac{\mathrm{Vol}(B_p(\frac{1}{k}))}{\mathrm{Vol}(B_p(\frac{1}{2k}))}\leq C_n$, where $C_n$ is a constant that depends only on the dimension of $M$ and $c$.
\end{corollary}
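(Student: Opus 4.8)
The plan is to obtain this as a direct consequence of the Bishop--Gromov relative volume comparison theorem. The hypothesis $K_M(x) \geq -c^2k^2$ on $B_p(1)$ yields the Ricci lower bound $\mathrm{Ric}_M(x) \geq -(n-1)c^2k^2$ on $B_p(1)$, and since $k \geq 1$ both balls $B_p(\tfrac{1}{2k})$ and $B_p(\tfrac{1}{k})$ sit inside $B_p(1)$, so this bound is available on the whole region entering the comparison.

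First I would invoke Bishop--Gromov in the following form: if $\mathrm{Ric}_M \geq -(n-1)a^2$ on $B_p(R)$, then $\rho \mapsto \mathrm{Vol}(B_p(\rho))/V_a(\rho)$ is non-increasing for $0 < \rho \leq R$, where
\[
V_a(\rho) = \omega_{n-1}\int_0^\rho \Bigl(\frac{\sinh(a t)}{a}\Bigr)^{n-1}\,dt
\]
is the volume of a geodesic ball of radius $\rho$ in the space form of constant curvature $-a^2$ and $\omega_{n-1}$ is the area of the unit $(n-1)$-sphere. Taking $a = ck$ and $R = \tfrac{1}{k}$, this gives
\[
\frac{\mathrm{Vol}(B_p(\tfrac{1}{k}))}{\mathrm{Vol}(B_p(\tfrac{1}{2k}))} \leq \frac{V_{ck}(\tfrac{1}{k})}{V_{ck}(\tfrac{1}{2k})}.
\]

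The last step is to check that the model ratio on the right is independent of $k$. Changing variables $t = s/k$ in the integral defining $V_{ck}$ pulls a common factor $\omega_{n-1}k^{-n}c^{-(n-1)}$ out of the numerator and the denominator, leaving
\[
\frac{V_{ck}(\tfrac{1}{k})}{V_{ck}(\tfrac{1}{2k})} = \frac{\int_0^1 \sinh^{n-1}(cs)\,ds}{\int_0^{1/2}\sinh^{n-1}(cs)\,ds} =: C_n,
\]
which is finite and depends only on $n$ and $c$. I do not expect a genuine obstacle here; the only point deserving a moment's attention is the observation that $k \geq 1$ confines the two balls to $B_p(1)$, where the curvature hypothesis holds, so that the comparison theorem may legitimately be applied.
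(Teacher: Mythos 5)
Your proof is correct and follows essentially the same route as the paper: both apply Bishop--Gromov monotonicity on $B_p(1)$ with comparison constant $a = ck$ and then compute the model-space ratio explicitly (your change of variables $t = s/k$ is the same rescaling the paper performs; the resulting constants agree after a further substitution $u = cs$). The only cosmetic difference is that you spell out the passage from the sectional curvature bound to the Ricci bound, which the paper leaves implicit.
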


\begin{proof}
By Bishop Volume Comparison theorem, $\dfrac{\mathrm{Vol}(B_p(R))}{V(-c^2k^2,R)}$ is non-increasing in $R$ for $R \leq 1$, where $V(-c^2k^2,R)$ is the volume of the geodesic balls of radius $R$ in the space form of constant curvature $-c^2k^2$. Thus
$$\frac{\mathrm{Vol}(B_p(\frac{1}{k}))}{V(-c^2k^2,\frac{1}{k})} \leq \frac{\mathrm{Vol}(B_p(\frac{1}{2k}))}{V(-c^2k^2,\frac{1}{2k})}, $$
which can be written as
$$\frac{\mathrm{Vol}(B_p(\frac{1}{k}))}{\mathrm{Vol}(B_p(\frac{1}{2k}))} \leq \frac{V(-c^2k^2,\frac{1}{k})}{V(-c^2k^2,\frac{1}{2k})}.$$

In the hyperbolic space of constant curvature $-K^2$, the volume of a ball of radius $r$ is given by
\begin{equation}
\label{2}
V(-K^2, r)=\Omega_n (\frac{1}{K})^{n-1} \int_0^r \sinh^{n-1}(Kr) dr,
\end{equation}
where $\Omega_n$ is the surface area of the unit sphere in $\mathbb{R}^n$.

Computing using \eqref{2}
\begin{equation*}
V(-c^2 k^2,\frac{1}{k})=\Omega_n (\frac{1}{ck})^{n-1} \int_0^{\frac{1}{k}} \sinh^{n-1}(ckr) dr=\Omega_n  (\frac{1}{ck})^n \int_0^c \sinh^{n-1} r dr,
\end{equation*}
and
$$V(-c^2 k^2,\frac{1}{2k})=\Omega_n (\frac{1}{ck})^{n-1} \int_0^{\frac{1}{2k}} \sinh^{n-1}(ckr) dr=\Omega_n  (\frac{1}{ck})^n \int_0^{c/2} \sinh^{n-1} r dr.$$

Now we can take $C_n=\dfrac{V(-c^2 k^2,\frac{1}{k})}{V(-c^2 k^2,\frac{1}{2k})}=\dfrac{\int_0^c \sinh^{n-1} r dr }{\int_0^{c/2} \sinh^{n-1} r dr}$, which is a constant that depends only on $n$ and $c$.
\end{proof}

We are now ready to estimate $\Delta \overline{\varphi}(x)$.
\begin{eqnarray*}
|\Delta \overline{\varphi}(x)|&=&| \int_M \Delta (\frac{u}{v})(\varphi(y)-\varphi(x))  dy|\\
&\leq& \int_{B_x(d(x))} |\Delta(\frac{u}{v})| dy \cdot    \mathrm{osc}_{B_x(d(x))} \varphi \\
&\leq& \sup_{B_x(d(x))}\{|\Delta(\frac{u}{v})|\}\cdot \mathrm{Vol}(B_x(d(x))) \cdot    \mathrm{osc}_{B_x(d(x))} \varphi \\
&=& \left(\frac{\mathrm{Vol}(B_x(d))}{\mathrm{Vol}\left(B_x\left(d/2\right)\right)}+\frac{(\mathrm{Vol}(B_x(d)))^2}{\left(\mathrm{Vol}\left(B_x\left(d/2\right)\right)\right)^2}+\frac{(\mathrm{Vol}(B_x(d)))^3}{(\mathrm{Vol}(B_x(d/2)))^3}\right) \cdot \mathrm{O}(e^{-\delta r}).
\end{eqnarray*}

Observe that $B_x(d(x)) \subset B_p(r(x)+1)$ and on  $B_p(r(x))$, $K_M(x) \geq -Ce^{(2-2\delta)r(x)}=-C(\frac{1}{d(x)})^2$. By Corollary \ref{volume}, we have
\begin{equation}
\dfrac{\mathrm{Vol}(B_x(d(x)))}{\mathrm{Vol}(B_x(d(x)/2))} =\mathrm{O}(1).
\end{equation}

It follows that
\begin{equation}
\label{Delta varphi}\Delta \overline{\varphi}=\mathrm{O}(e^{-\delta r}).
\end{equation}

Define $g(x)=e^{-\delta_0 r(x)}$, where $\delta_0$ is a positive constant to be chosen later. We have
$$\Delta g=g \cdot (-\delta_0  \Delta r+\delta_0^2|\nabla r|^2).$$

Since $K_M \leq -1$ we have $\Delta r \geq n-1$, choose $\delta_0<\delta$ small enough so that 
\begin{equation}
\label{Delta negative}-\delta_0  \Delta r+\delta_0^2|\nabla r|^2<-(n-1)\delta_0+\delta_0^2
\end{equation}
is less that a negative constant.

Now, since $\Delta \overline\varphi=\mathrm{O}(e^{-\delta r})=\mathrm{o}(e^{-\delta_0 r})=\mathrm{o}(g)$, there exists a constant $\alpha>0$ such that
$$\Delta(\alpha g) \leq -|\Delta \overline{\varphi}|,$$which implies that $\overline\varphi-\alpha g$ is subharmonic and $\overline\varphi+\alpha g$ is superharmonic. It follows from the classical Perron's method that there exists a harmonic function $f$ such that
$$\overline{\varphi}-\alpha g \leq f \leq \overline{\varphi} +\alpha g.$$

Since $\overline{\varphi}$ and $\varphi$ have the same boundary value and $g=0$ on $S(\infty)$, $f=\varphi$ on the boundary. This completes the proof of Theorem \ref{main}.\\

\section{Martin Boundary}

Throughout this section we still assume $M$ is a complete, simply connected n-dimensional Riemannian manifold whose sectional curvature satisfies
$$-Ce^{(2-2\delta)r(x)} \leq K_M(x) \leq -1,$$
but everything carries over to manifolds which admit positive superharmonic functions vanishing at infinity.

From Theorem \ref{main} we know there exists a nontrivial bounded harmonic function $f$ on $M$. This implies (cf. \cite{MR1333601}) that $M$ possesses a positive symmetric Green's function $G(p,x)$. Moreover, if we denote by $G_i(p,x)$ the Green's function  on $\Omega_i$ with Dirichlet boundary condition, where $\{\Omega_i,i=1,2,\cdots\}$ is a compact exhaustion of $M$, then $G_i$ converges uniformly to $G$ on compact subsets of $M \setminus\{p\}$.

We have shown on page \pageref{Delta negative} that if $\alpha>0$ is sufficiently small, then 
$$\Delta(e^{-\alpha r}) \leq 0$$ on $M$. Let $$C_1=\sup_{\partial B_p(1)} G(p,x)e^{\alpha r(x)}>0,$$ we have
$$G_i(p,x) \leq G(p,x) \leq C_1 e^{-\alpha r(x)} \textrm{ on } \partial B_p(1),$$
$$0=G_i(p,x) <  C_1 e^{-\alpha r(x)} \textrm{ on } \partial \Omega_i,$$ and
$$0=\Delta G_i \geq \Delta (C_1 e^{-\alpha r}) \textrm{ on } \Omega_i \setminus B_p(1).$$
It follows from the maximum principle that
$$G_i \leq C_1 e^{-\alpha r} \textrm{ on } \Omega_i \setminus B_p(1).$$
Passing to the limit
$$G(p,y) \leq C_1 e^{-\alpha r(x)} \textrm{ on } M \setminus B_p(1),$$
which implies that $G$ extends continuously to $\overline{M}$ with value $0$ on $S(\infty)$.

For $x,y \in M$, let
\begin{displaymath}
h_y(x)=\dfrac{G(x,y)}{G(p,y)}
\end{displaymath}
be the normalized Green's function with $h_y(p)=1$. A sequence $Y=\{y_i\}$ is called fundamental if $h_{y_i}$ converges to a positive harmonic function $h_Y$ on $M$. Two fundamental sequences $Y$ and $\overline{Y}$ are said to be equivalent if the corresponding limiting positive harmonic functions $h_Y$ and $h_{\overline{Y}}$ are the same.

\begin{definition}
The Martin boundary $\mathcal{M}$ of $M$ is the set of equivalence classes of non-convergent fundamental sequences.
\end{definition}

Let $\widetilde{M}=M \cup \mathcal{M}$. For each $y \in M$, all sequences  converging to $y$ form an equivalence class $[Y]$. On the other hand, two fundamental sequences that have different limit points in $M$ are not equivalent. Thus $\widetilde{M}$ can be identified with the set of  equivalence classes of fundamental sequences. Define a metric $\rho$ on $\widetilde{M}$
\begin{equation}
\rho([Y],[Y'])=\sup_{B_p(1)} |h_Y(x)-h_{Y'}(x)|
\end{equation}
for $[Y],[Y'] \in \widetilde{M}$. The topology induced by $\rho$ makes $\widetilde{M}$ a compactification of $M$.

It is known from \cite{MR1333601} that if for all $\theta_1, \theta_2$ with $0<\theta_2<\theta_1<\pi/4$, there exists a positive constant $\alpha$ depending only on $n$, $C$, $\delta$, $\theta_1$ and $\theta_2$, such that for any positive harmonic function $u \in C^0(\overline{C_p(\theta_1)})$ which vanishes on $\overline{C_p(\theta_1)} \cap S(\infty)$, the Harnack inequality
\begin{equation}
\label{harnack 1} u(x) \leq C_1 u(p') e^{-\alpha r(x)}
\end{equation}
holds on $T(\theta_2,1)$ , then there is a natural surjection $\Phi: \mathcal{M} \to S(\infty)$. In fact, let $\{y_k\}$ be a sequence of points converging to $\xi \in S(\infty)$. ``1 It follows that $P_\xi \neq P_{\tilde{\xi}}$ if $\xi \neq \tilde{\xi}$. Thus a fundamental sequence has a unique limit point. The map is then well defined and surjective.

Moreover, if for any positive harmonic functions $u,v \in C^0(\overline{C_p(\theta_1)})$ which vanish on $\overline{C_p(\theta_1)} \cap S(\infty)$, we have, for all $x \in T(\theta_2,1)$,
\begin{equation}
\label{harnack 2}\tilde{C}^{-1}\frac{u(p')}{v(p')} \leq \frac{u(x)}{v(x)} \leq \tilde{C}\frac{u(p')}{v(p')}, 
\end{equation}
then $\Phi$ defined above is one-to-one and therefore a homeomorphism. For further details, see Chapter II in \cite{MR1333601}.\\

\section{Boundary Harnack Inequalities}

In this section we prove \eqref{harnack 1} and \eqref{harnack 2} to establish homeomorphism between $\mathcal{M}$ and $S(\infty)$. We assume $M$ is a complete, simply connected $n$-dimensional Riemannian manifold whose sectional curvature satisfies $$-Ce^{(2/3-2\delta)r} \leq K_M \leq -1,$$unless otherwise stated.

Given $\omega \in S_p$. Let $p'=\mathrm{exp}_p \omega$. Recall that $C_p(\theta)=C_p(\omega,\theta)$ is the cone about $\omega$ of angle $\theta$ at $p$, and $T_p(\theta,R)=T_p(\omega,\theta,R)=C_p(\omega,\theta)\setminus B_p(R)$ is the truncated cone.

Let $0<\theta_2<\theta_1<\pi/4$ and $\theta_3=(\theta_1+\theta_2)/2$.

We want to prove the following two boundary Harnack inequalities.

\begin{theorem}
\label{Harnack 1}
Let $u$ be a positive harmonic function on $C_p(\theta_1)$ which is continuous on  $\overline{C_p(\theta_1)}$ and  vanishes on $\overline{C_p(\theta_1)} \cap S(\infty)$. Then for all $x \in T(\theta_2,1)$,
\begin{displaymath}
u(x) \leq \tilde{C}e^{-\alpha r(x)}u(p'),
\end{displaymath}
where  $\tilde{C}$ and $\alpha$ depend only on $n$, $C$, $\delta$, $\theta_1$ and $\theta_2$.
\end{theorem}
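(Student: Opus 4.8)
The plan is to follow the Anderson--Schoen barrier argument, replacing the fixed-radius mollification used in their pinched-curvature setting with the variable-radius averaging developed in Section 3, and to control all error terms using the Bishop-type volume comparison (Corollary \ref{volume}) and the Hessian/Laplacian comparison (Corollary \ref{Laplacian}). First I would reduce to constructing a supersolution: it suffices to build a function $w$ on the truncated cone $T(\theta_3,1)$ that is superharmonic there, satisfies $w \geq 1$ on the ``inner'' boundary $\partial C_p(\theta_3) \cap \{r \geq 1\}$ and on $\partial B_p(1) \cap C_p(\theta_3)$ (or more precisely dominates $u/(\kappa u(p'))$ there by the interior Harnack inequality applied in a fixed compact set, with $\kappa$ depending only on $n, C, \delta, \theta_1, \theta_2$), and decays like $e^{-\alpha r}$ as $r \to \infty$. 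Then the maximum principle on $T(\theta_3,1)$ gives $u(x) \leq \kappa u(p') w(x) \leq \tilde C e^{-\alpha r(x)} u(p')$ on $T(\theta_2,1)$, as desired. The nontrivial point is that $u$ vanishes continuously on $\overline{C_p(\theta_1)} \cap S(\infty)$, so near infinity $u$ is small, but we need the decay to be \emph{exponential} and uniform; the barrier must carry that.

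The barrier I would use has the shape $w(x) = A\,e^{-\alpha r(x)} + B\,\psi(x)$, where $\psi$ is an angular cutoff that is $\geq 1$ on the complement of the cone $C_p(\theta_3)$ and small inside $C_p(\theta_2)$ — concretely $\psi(x) = \eta(\angle(\omega,\gamma_{px}'(0)))$ for a smooth $\eta$ with $\eta \equiv 0$ on $[0,\theta_2]$, $\eta \equiv 1$ on $[\theta_3,\pi]$, composed with a mollification at scale $d(x) = e^{-(1-\delta)r(x)}$ exactly as in Section 3 so that $\psi$ is smooth and its derivatives are controlled. The key computation is $\Delta w$: for the radial term, $\Delta(e^{-\alpha r}) = e^{-\alpha r}(\alpha^2|\nabla r|^2 - \alpha \Delta r)$, and since $\Delta r \geq n-1 > 0$ by $K_M \leq -1$, for $\alpha$ small this is $\leq -c\,\alpha\, e^{-\alpha r} < 0$. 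For the angular term I must estimate $\Delta \psi$: writing $\psi$ via the averaging kernel $u(x,y)$ of Section 3, the bound $\Delta(u/v) = O(e^{2(1-\delta)r})$ from Lemma \ref{laplacian uv} (using Corollary \ref{volume} to kill the volume ratios) gives $|\Delta \psi| = O(e^{2(1-\delta)r})$ on the transition region $\theta_2 \leq \angle \leq \theta_3$, and $\psi$ together with its derivatives vanishes where $\angle \leq \theta_2$. Here is where the curvature hypothesis $\lambda < 2/3$, i.e.\ $-Ce^{(2/3-2\delta)r} \leq K_M$, enters: redoing the Section-3 estimates with this stronger bound replaces $e^{2(1-\delta)r}$ by a smaller power, so that $|\Delta \psi| = O(e^{(2/3-\delta')r})$ or similar — small enough to be dominated by the negative radial term. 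Balancing $A$ and $B$: on the inner cone boundary $\psi \geq 1$ so $w \geq B$; choosing $B$ of order one makes $w \geq \kappa^{-1}$ there after rescaling, while choosing $A$ large but $\alpha$ small keeps $w$ a genuine supersolution on all of $T(\theta_3,1)$ because $\Delta w \leq -c\alpha A e^{-\alpha r} + C B e^{(2/3-\delta')r} \cdot \mathbf{1}_{\{\theta_2 \leq \angle \leq \theta_3\}}$, and on that transition region $r$ is comparable to the distance to $p$, so one must check the exponential rates are compatible — this forces $\alpha$ to be chosen after $\delta$, and is the reason the theorem needs $\lambda < 2/3$ rather than $\lambda < 2$.

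The main obstacle I anticipate is precisely this rate-matching on the angular transition region: the mollified angular cutoff $\psi$ is forced to live at scale $d(x) = e^{-(1-\delta)r}$ (so that the geometry looks nearly hyperbolic at that scale and the volume ratios stay bounded), but then its Laplacian picks up two factors of $1/d(x) = e^{(1-\delta)r}$ from differentiating twice, plus a contribution from $\Delta r$ and $\Delta \rho$ that is itself as large as $e^{(1-\delta)r}$ under the curvature bound — and under the weaker bound $\lambda < 2$ these combine to $e^{2(1-\delta)r}$, which cannot be absorbed by $e^{-\alpha r}$ no matter how small $\alpha$ is. Only the stronger bound $\lambda < 2/3$ tames $\Delta \rho$ enough (via Corollary \ref{Laplacian} with $k = C^{1/2}e^{(1/3-\delta)r}$) to make the angular error term subexponential in a usable way. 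A secondary technical point is verifying that $\psi$ is actually superharmonic-compatible near the lateral boundary $\partial C_p(\theta_1)$ where $u$ need not vanish — handled by only running the maximum principle on the slightly smaller cone $C_p(\theta_3)$ and invoking the interior Harnack inequality on the compact collar $\{1 \leq r \leq 2\} \cap C_p(\theta_3)$ to get the boundary comparison constant $\kappa$. I would assemble these pieces and then read off Theorem \ref{Harnack 1}.
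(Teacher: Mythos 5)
Your barrier construction with $w = A e^{-\alpha r} + B\psi$ and an angular cutoff $\psi$ built by variable-radius averaging reproduces, in essence, the paper's Lemma \ref{weak harnack}: that a positive harmonic function on $C_p(\theta_3)$ vanishing at infinity decays like $e^{-\alpha r}\sup_{\partial C_p(\theta_3)} u$ inside the smaller cone. (One quantitative correction: the mollified cutoff in Section 5 lives at scale $e^{(-1/3+\delta)r}$, not $e^{-(1-\delta)r}$, and the Laplacian estimate you need is $|\Delta\psi| = \mathrm{O}(e^{-(2/3+\delta)r})$ --- a \emph{decaying} quantity coming from the product of the $\Delta(u/v)$ bound with the oscillation of the angular function on the mollification ball, not a growing term $\mathrm{O}(e^{(2/3-\delta')r})$ as you wrote. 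With the correct decaying bound, absorption into $\Delta(e^{-\alpha r})$ is immediate for any small $\alpha$.)

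The real gap is the step you dismiss as a ``secondary technical point.'' Your maximum principle on $T(\theta_3,1)$ requires $w \gtrsim u/(\kappa u(p'))$ on the \emph{entire} boundary, including the lateral piece $\partial C_p(\theta_3)\cap\{r>1\}$, which is noncompact. The interior Harnack inequality on a fixed collar $\{1\leq r\leq 2\}\cap C_p(\theta_3)$ gives you nothing at large $r$; there is no a priori relation between $u$ on $\partial C_p(\theta_3)$ far out and $u(p')$. Establishing the uniform bound $\sup_{\partial C_p(\theta_3)} u \leq C\,u(p')$ is precisely the hard content of Theorem \ref{Harnack 1} and occupies most of the paper's proof. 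The paper does it by a very different device: introducing $\xi=\log u$, an interpolating exponent $\varphi\in[2/3,1]$ (equal to $2/3$ inside $C_p(\theta_3)$ and $1$ on $\partial C_p(\theta_1)$), a slowly-varying auxiliary function $\psi$ with $\psi'(t)=-1/(|t|\log^2|t|)$, and the test function $F=\psi(\xi-e^{-\beta r})\,u^\varphi$, augmented to $G=F+C u^{1/2}+e^{-\alpha_1 r}$; the Yau gradient estimate (Corollary \ref{local gradient estimate}) controls $\nabla\xi$ and $\xi$, the superharmonicity of $G$ is forced by balancing the $|\nabla\xi|^2$ terms, and the conclusion $u\leq C_{14}G\leq C u^{2/3}+C u^{1/2}+e^{-\alpha_1 r}$ on $T(\theta_3,R_0)$ gives boundedness. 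Your proposal contains no analogue of this argument, so as written it proves only Lemma \ref{weak harnack}, not Theorem \ref{Harnack 1}.
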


\begin{theorem}
\label{Harnack 2}
Let $u$, $v$ be two positive harmonic functions on  $C_p(\theta_1)$ which are continuous on  $\overline{C_p(\theta_1)}$ and which vanish on $\overline{C_p(\theta_1)} \cap S(\infty)$. Then for all $x \in T(\theta_2,1)$,
\begin{displaymath}
 \tilde{C}^{-1}\frac{u(p')}{v(p')} \leq \frac{u(x)}{v(x)} \leq \tilde{C}\frac{u(p')}{v(p')},
\end{displaymath}
where $\tilde{C}$ depends only on $n$, $C$, $\delta$, $\theta_1$ and $\theta_2$.
\end{theorem}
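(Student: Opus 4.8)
The plan is to deduce Theorem \ref{Harnack 2} from Theorem \ref{Harnack 1} (the one-sided decay estimate) together with a complementary lower bound and the classical interior Harnack inequality, by a maximum-principle comparison on truncated cones in the spirit of Anderson and Schoen. As a preliminary reduction: the asserted chain of inequalities is invariant under the rescalings $u\mapsto au$ and $v\mapsto bv$, so I may assume $u(p')=v(p')=1$; since it is also symmetric in $u$ and $v$, it then suffices to prove the single bound $u(x)\le\tilde C\,v(x)$ for every $x\in T(\theta_2,1)$. Throughout, set $\theta_3=(\theta_1+\theta_2)/2$ and let $\gamma_\omega$ be the geodesic ray with $\gamma_\omega(0)=p$ and $\gamma'_\omega(0)=\omega$, so that $p'=\gamma_\omega(1)$.

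The comparison uses three inputs, all with constants depending only on $n,C,\delta,\theta_1,\theta_2$. First, Theorem \ref{Harnack 1} and its localization: rerunning the superharmonic-barrier argument of Theorem \ref{Harnack 1} on $C_p(\theta_3)\setminus B_p(R)$ in place of $C_p(\theta_1)\setminus B_p(1)$ gives, for each $R\ge 1$ and each $x\in C_p(\theta_3)$ with $r(x)\ge R$, the decay bound $u(x)\le\tilde C_1\,e^{-\alpha(r(x)-R)}\sup_{\overline{C_p(\theta_3)}\cap\partial B_p(R)}u$, and the same for $v$. Second, a matching lower bound $v(x)\ge c_1\,e^{-\beta r(x)}$ on $T(\theta_2,1)$: one compares $v$, by the maximum principle on $C_p(\theta_3)\setminus B_p(1)$, with an explicit subharmonic function that vanishes on the lateral boundary and at $S(\infty)$ (equivalently, one bounds below the harmonic measure of the inner sphere $\overline{C_p(\theta_3)}\cap\partial B_p(1)$ as seen from $x$), using that $v$ is bounded below on that inner sphere by the interior Harnack inequality on the fixed ball $B_p(2)$, where the curvature lies in a fixed range. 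Third, the geometric fact — via the Toponogov comparison theorem — that any $x$ with $\angle(\omega,\gamma'_{px}(0))\le\theta_2$ and $r(x)=R$ lies at distance $\gtrsim R$ from $\partial C_p(\theta_1)$; this both legitimizes the maximum-principle comparisons above and, combined with the interior Harnack inequality on suitably large balls, lets one show that $u(x)/v(x)$ is comparable, with a constant independent of $x$ and $R$, to the single axis ratio $u(\gamma_\omega(R))/v(\gamma_\omega(R))$.

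Granting these, one runs the Anderson--Schoen iteration along the axis: on passing from scale $R$ to scale $2R$ the estimates above compose with a fixed multiplicative constant, so $u(\gamma_\omega(2^k))/v(\gamma_\omega(2^k))$ stays within a bounded factor of $u(\gamma_\omega(1))/v(\gamma_\omega(1))=1$ for all $k\ge 0$; chaining over the dyadic shells covering $T(\theta_2,1)$ yields $u(x)\le\tilde C\,v(x)$ there, and the symmetry $u\leftrightarrow v$ completes the proof. The main obstacle is everything hidden in the first two inputs — namely, the construction and verification, under the relaxed hypothesis $-Ce^{(2/3-2\delta)r}\le K_M\le -1$, of the superharmonic barrier behind Theorem \ref{Harnack 1} and of the subharmonic comparison function behind the lower bound, in a form that dominates (respectively, is dominated by) the prescribed boundary data along the whole lateral boundary. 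The curvature growth enters through $\Delta r$, which by Corollary \ref{Laplacian} may be as large as $\mathrm{O}\!\left(e^{(1/3-\delta)r}\right)$, and, exactly as in the computation leading to Lemma \ref{laplacian uv} and equation \eqref{Delta varphi}, these barriers must be arranged so that products of such curvature-affected quantities are absorbed by the favorable negative terms; carrying this through is what restricts the exponent to $2/3-2\delta<2/3$, and is the reason, flagged in Remark 1.1, that Theorem \ref{Martin} is obtained only under this stronger condition. Everything outside these barrier estimates is insensitive to the curvature growth and follows Anderson and Schoen essentially verbatim.
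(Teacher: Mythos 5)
Your overall reduction (normalize $u(p')=v(p')=1$, prove one side $u\le\tilde C v$, appeal to symmetry) matches the paper, and your first input (Theorem \ref{Harnack 1}) is indeed used. But the core of your proposal — a dyadic iteration along the axis ray $\gamma_\omega$, comparing $u(\gamma_\omega(2^k))/v(\gamma_\omega(2^k))$ across scales and then spreading that information over shells via interior Harnack — is not what the paper does, and as written it has two genuine gaps.

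First, the chaining step as you phrase it does not give a uniform constant. If on passing from scale $R$ to $2R$ you only know that the estimates ``compose with a fixed multiplicative constant'' $A>1$, then after $k$ steps the ratio is controlled only up to $A^k$, which is unbounded. A chaining proof of this type needs an oscillation-decay statement (the oscillation of $\log(u/v)$ shrinks by a fixed factor at each scale), not merely a multiplicative comparability at each scale; you neither state nor sketch a mechanism producing such decay. Second, the transfer ``from a generic $x\in T(\theta_2,1)$ with $r(x)=R$ to the axis point $\gamma_\omega(R)$ via interior Harnack on suitably large balls'' fails under the hypothesis $-Ce^{(2/3-2\delta)r}\le K_M\le -1$. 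The two points are at mutual distance $\sim R$, and by Corollary \ref{local gradient estimate} the best available gradient bound is $|\nabla\log u|=\mathrm{O}(e^{(1/3-\delta/2)r})$; integrating along a path of length $\sim R$ whose points satisfy $r\lesssim R$ changes $\log u$ by up to $\mathrm{O}(R\,e^{(1/3-\delta/2)R})$, so the ``Harnack constant on suitably large balls'' is super-exponential in $R$, not bounded. This is exactly the difficulty the paper is organized to avoid. A related soft spot: your second input, the lower bound $v\ge c_1 e^{-\beta r}$, is neither proved nor used in the paper; the paper instead gets the only needed comparison from the upper bounds, via $\xi=-\log u$ satisfying $C_2 r\le\xi\le C_3 e^{r/3}$ and hence $\xi^{-\epsilon}\ge e^{-\epsilon r/3}\ge C_4 v$ for small $\epsilon$, using Theorem \ref{Harnack 1} applied to $v$. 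Constructing a subharmonic minorant vanishing on the lateral cone boundary under this curvature growth is itself nontrivial and is not the route taken.

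The paper's proof is a single global maximum-principle comparison. It builds $f=u^{1-\varphi}\xi^{-\epsilon\varphi}$ with the cut-off $\varphi$ of Lemma \ref{cut-off}, then sets $F=\psi(\xi+e^{-\beta r})\,f$ for a carefully designed $\psi$ with $\psi'(t)=|t|^{-1}\log^{-2}|t|$; the point is that $\Delta F\le 0$ can be verified directly on $T(\theta_1,R_0)$ using the gradient estimate, $\Delta\xi=|\nabla\xi|^2$, $\Delta\log\xi=|\nabla\xi|^2/\xi-|\nabla\log\xi|^2$, and the favorable $-\psi'|\nabla\xi|^2$ term, with all curvature-affected error terms (entering through $\Delta r$ and $\Delta\varphi$) absorbed when the exponent is $2/3-2\delta$. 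Then $F\ge v$ on the boundary of the truncated cone and $F\le C u$ on $T(\theta_2,R_0)$, giving $v\le \tilde C u$ by the maximum principle in one stroke, with no iteration and no cross-cone Harnack transfer. I would encourage you to rework your sketch along these lines: the barrier construction is precisely the missing ingredient you flag at the end of your proposal, and the dyadic scaffolding you have built around it cannot be made to close without it.
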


First we need to construct a cut-off function with small second derivatives.
\begin{lemma}
\label{cut-off}
Given two constants $\alpha$ and $\beta$, there exists $\varphi \in C^{\infty}(M)$ and a constant $R_0>0$ such that
\begin{equation*}
\left \{
\begin{array}{rcl}
\varphi &=& \alpha  \text{ on } T(\theta_2,R_0),\\
\varphi &=& \beta \text{ on } \partial C_p(\theta_1)\setminus B_p(R_0),\\
|\nabla \varphi| &=& \mathrm{O}(e^{-r}) \text{ on } T(\theta_2,R_0),\\
|\Delta \varphi| &=& \mathrm{O}(e^{-(2/3+\delta)r}) \text{ on } T(\theta_2,R_0).
\end{array} \right.
\end{equation*}
\end{lemma}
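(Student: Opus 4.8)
The plan is to imitate the mollification argument of Section~3, with the boundary datum replaced by an angular step function and the averaging radius $e^{-(1-\delta)r}$ replaced by the smaller radius $d(x)=e^{-(1/3-\delta)r(x)}$. First I would fix numbers $\theta_2<\theta_2'<\theta_1'<\theta_1$ and a function $\psi\in C^\infty(\mathbb{R})$ with $\psi\equiv\alpha$ on $(-\infty,\theta_2']$ and $\psi\equiv\beta$ on $[\theta_1',\infty)$, and let $\theta(x)=\angle(\omega,\gamma'_{px}(0))$ be the angle at $p$ between $\omega$ and $x$, so that $\psi\circ\theta$ is a bounded function on $M\setminus\{p\}$, locally constant near the axis and near the antipodal ray. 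With $\chi$ as in Section~3 set $u(x,y)=\chi(e^{2(1/3-\delta)r(x)}\rho_x^2(y))$, $v(x)=\int_M u(x,y)\,dy$, and define, for $r(x)\geq1$,
\[
\varphi(x)=\frac{\int_M u(x,y)\,\psi(\theta(y))\,dy}{\int_M u(x,y)\,dy}.
\]
This is automatically smooth in $x$ on $M\setminus\{p\}$, since the $x$-dependence sits entirely in the smooth kernel $u$ and the denominator is bounded below by $\mathrm{Vol}(B_x(d(x)/2))>0$; multiplying by a standard radial cut-off equal to $1$ for $r\geq1$ extends $\varphi$ to $C^\infty(M)$ without changing it on $\{r\geq1\}$, which is harmless because all assertions concern $\{r\geq R_0\}$ with $R_0>1$.

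Next I would pin down the boundary values. By Lemma~\ref{hyperbolic}, for $r(x)\geq1$ and $y\in B_x(d(x))$ one has $\angle(px,py)\leq 2d(x)/(e^{r(x)-d(x)}-1)=\mathrm{O}(e^{-(4/3-\delta)r})$, hence $|\theta(y)-\theta(x)|=\mathrm{O}(e^{-(4/3-\delta)r})$ as well. Choosing $R_0$ so large that this quantity is $<\min(\theta_2'-\theta_2,\,\theta_1-\theta_1')$ for $r\geq R_0$, we get: for $x\in T(\theta_2,R_0)$ the support of $u(x,\cdot)$ lies in $\{\theta<\theta_2'\}$, so $\psi\circ\theta\equiv\alpha$ there and $\varphi(x)=\alpha$; for $x\in\partial C_p(\theta_1)\setminus B_p(R_0)$ the support lies in $\{\theta>\theta_1'\}$ and $\varphi(x)=\beta$. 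The same estimate together with the Lipschitz continuity of $\psi$ gives $\mathrm{osc}_{B_x(d(x))}(\psi\circ\theta)=\mathrm{O}(e^{-(4/3-\delta)r})$, the exact analogue of \eqref{osc}.

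Then I would run the computations of Section~3 word for word with $(1-\delta)$ replaced by $(1/3-\delta)$. The hypothesis $-Ce^{(2/3-2\delta)r}\leq K_M\leq-1$ is precisely $-C(1/d(x))^2\leq K_M\leq-1$ on $B_p(r(x))$, so Corollary~\ref{Laplacian} yields $\Delta r=\mathrm{O}(e^{(1/3-\delta)r})$ and $\rho\,\Delta\rho=\mathrm{O}(1)$ for $d(x)/2\leq\rho\leq d(x)$, whence $\nabla u=\mathrm{O}(e^{(1/3-\delta)r})$ and $\Delta u=\mathrm{O}(e^{2(1/3-\delta)r})$ exactly as in \eqref{nabla u} and \eqref{Delta u}; and Corollary~\ref{volume} gives $\mathrm{Vol}(B_x(d))/\mathrm{Vol}(B_x(d/2))=\mathrm{O}(1)$ on the cone region, exactly as in the derivation of \eqref{Delta varphi}. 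Combining these as in the estimate of $\Delta\overline\varphi$ there,
\[
|\nabla\varphi|\leq\sup_{B_x(d)}|\nabla(u/v)|\cdot\mathrm{Vol}(B_x(d))\cdot\mathrm{osc}_{B_x(d)}(\psi\circ\theta)=\mathrm{O}(e^{(1/3-\delta)r}\cdot e^{-(4/3-\delta)r})=\mathrm{O}(e^{-r}),
\]
\[
|\Delta\varphi|\leq\sup_{B_x(d)}|\Delta(u/v)|\cdot\mathrm{Vol}(B_x(d))\cdot\mathrm{osc}_{B_x(d)}(\psi\circ\theta)=\mathrm{O}(e^{2(1/3-\delta)r}\cdot e^{-(4/3-\delta)r})=\mathrm{O}(e^{-(2/3+\delta)r}),
\]
valid on $\overline{C_p(\theta_1)}\setminus B_p(R_0)$, and in particular on $T(\theta_2,R_0)$ where $\varphi$ is anyway constant.

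The main obstacle is not a new geometric estimate — everything is powered by Section~3, Corollary~\ref{Laplacian} and Corollary~\ref{volume} — but rather the correct calibration of the averaging radius. It must be small enough that $\rho\,\Delta\rho$ stays bounded over $B_x(d(x))$ \emph{and} that the Bishop ratio $\mathrm{Vol}(B_x(d))/\mathrm{Vol}(B_x(d/2))$ stays bounded, and under the present curvature hypothesis both constraints force $d(x)\lesssim e^{-(1/3-\delta)r}$; on the other hand $\mathrm{osc}_{B_x(d(x))}(\psi\circ\theta)=\mathrm{O}(d(x)e^{-r})$, so taking $d(x)$ as large as admissible is precisely what produces the decay rate $e^{-(2/3+\delta)r}$ for $\Delta\varphi$. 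This is also where the exponent $2/3$ is forced: with a lower curvature bound $-Ce^{\lambda r}$ the admissible radius becomes $e^{-\lambda r/2}$ and one only obtains $|\Delta\varphi|=\mathrm{O}(e^{-(1-\lambda/2)r})$. The only remaining points to check carefully are that the inset of the transition interval $(\theta_2',\theta_1')$ inside $(\theta_2,\theta_1)$ really makes the mollification reproduce $\alpha$ and $\beta$ verbatim (done above via the angular-width bound) and the smoothness of $\varphi$ across $p$ (handled by the radial cut-off).
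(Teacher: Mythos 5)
Your proposal is correct and takes essentially the same route as the paper: replace the boundary datum by the angular step function $\psi\circ\theta$, mollify it over balls of radius $d(x)=e^{-(1/3-\delta)r(x)}$ with the kernel $\chi(e^{2(1/3-\delta)r}\rho_x^2)$, and re-run the estimates of Section~3 (Lemma~\ref{hyperbolic}, Corollary~\ref{Laplacian}, Corollary~\ref{volume}) with $(1-\delta)$ replaced by $(1/3-\delta)$. The paper's own proof is exactly this, stated more tersely; your write-up merely fills in the details it defers to Section~3, including the useful explicit observation that the angular width $\mathrm{O}(e^{-(4/3-\delta)r})$ of the averaging ball is what pins $\varphi$ to the constants $\alpha$ and $\beta$ on the indicated regions.
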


\begin{proof}
Let $\psi \in C^\infty([0,\pi])$ be a function satisfying $0 \leq \psi \leq 1$, $\psi(t)=0$ for $t \in [0,\theta_2+\epsilon]$ and $\psi(t)=1$ for $t \in [\theta_1-\epsilon,\theta_1+\epsilon]$, where $\epsilon<(\theta_1-\theta_2)/2$ is a small positive constant. Set  $$\tilde{\psi}(x)=\psi(\angle(px,v)).$$ $\tilde{\psi}$ is smooth and bounded on $M\setminus \{p\}$. We take the average $\varphi$ of $\tilde{\psi}$ in the ball $B_x(e^{(-1/3+\delta)r(x)})$ by defining
$$\varphi(x)=\dfrac{\int_M \chi(e^{(2/3-2\delta)r(x)}\rho_x^2(y)) \tilde{\psi}(y) dy}{\int_M \chi(e^{(2/3-2\delta)r(x)} \rho_x^2(y)) dy},$$
where $\chi \in C_0^\infty(\mathbb{R})$ is a cut-off function satisfying $0 \leq \chi \leq 1$, $\chi(t)=0$ for $|t| \geq 1$ and $\chi(t)=1$ for $|t| \leq \dfrac{1}{4}$.  The proof that $\varphi$ is our desired cut-off function is very similar to that of Theorem \ref{main} on page \pageref{Delta varphi}.
\end{proof}

We will need the following gradient estimate for positive harmonic functions due to Yau.

\begin{theorem}(\cite{MR0431040})
\label{gradient estimate}
Let $N$ be a complete Riemannian manifold of dimension $n$. Suppose that the Ricci curvature on $B_p(R)$ is bounded from below by $-(n-1)K$ for some constant $K \geq 0$. If $u$ is a positive harmonic function on $B_p(R)$, then for any $0<\epsilon<1$, we have, for all $x \in B_{\epsilon R}$,
\begin{displaymath}
\dfrac{|\nabla u|}{u} \leq \tilde{C}(\dfrac{1}{R}+\sqrt{K}),
\end{displaymath}
where $\tilde{C}$ is a constant depending only on $n$ and $\epsilon$.
\end{theorem}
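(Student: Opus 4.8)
The plan is to reproduce Yau's Bochner-formula-plus-maximum-principle argument, of which Theorem \ref{gradient estimate} is the classical conclusion. Since $u>0$ on $B_p(R)$, set $f=\log u$; harmonicity of $u$ gives $\Delta f=-|\nabla f|^2$. Write $w=|\nabla f|^2$. The goal is the interior bound $w\le C(n,\epsilon)(R^{-2}+K)$ on $B_{\epsilon R}$, which is exactly the asserted inequality after taking square roots. The starting point is the Bochner formula
\begin{equation*}
\frac12\,\Delta w=|\mathrm{Hess}\,f|^2+\langle\nabla f,\nabla(\Delta f)\rangle+\mathrm{Ric}(\nabla f,\nabla f).
\end{equation*}
Substituting $\Delta f=-w$, using the Cauchy--Schwarz bound $|\mathrm{Hess}\,f|^2\ge(\Delta f)^2/n=w^2/n$, and the hypothesis $\mathrm{Ric}\ge-(n-1)K$, one obtains the differential inequality
\begin{equation*}
\frac12\,\Delta w\ge\frac{1}{n}\,w^2-\langle\nabla f,\nabla w\rangle-(n-1)K\,w\qquad\text{on }B_p(R).
\end{equation*}

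Next I would localize with a cutoff $\phi=\eta(r(\cdot))$, where $\eta\in C^\infty([0,\infty))$ satisfies $\eta\equiv1$ on $[0,\epsilon R]$, $\mathrm{supp}\,\eta\subset[0,R)$, $\eta'\le0$, $|\eta'|^2/\eta\le c(\epsilon)R^{-2}$ and $\eta''\ge-c(\epsilon)R^{-2}$. By the Laplacian comparison theorem (this is where the Ricci lower bound is used a second time), $\Delta r\le(n-1)\big(\sqrt K+1/r\big)$, so on $\mathrm{supp}\,\eta'$ one has $\Delta\phi=\eta''+\eta'\Delta r\ge-C(n,\epsilon)\big(R^{-2}+R^{-1}\sqrt K\big)$, while $|\nabla\phi|^2/\phi\le c(\epsilon)R^{-2}$ there. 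Put $G=\phi w$ and let $x_0$ be a point where $G$ attains its maximum over $\overline{B_p(R)}$; we may assume $G(x_0)>0$, otherwise $w\equiv0$ on $B_{\epsilon R}$ and there is nothing to prove. If $x_0$ lies in the cut locus of $p$, replace $r$ near $x_0$ by a smooth upper barrier in Calabi's manner; otherwise work directly. At $x_0$ we have $\nabla G=0$, hence $\nabla w=-(w/\phi)\nabla\phi$, and $\Delta G\le0$.

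Then I would expand $0\ge\Delta G=(\Delta\phi)w+2\,\nabla\phi\cdot\nabla w+\phi\,\Delta w$, insert the Bochner inequality for $\Delta w$ together with $\nabla w=-(w/\phi)\nabla\phi$, bound the cross term via $|\langle\nabla f,\nabla w\rangle|\le(w^{3/2}/\phi)|\nabla\phi|$, and multiply through by $\phi$ so that every term is expressed through $G$, $\Delta\phi$ and $|\nabla\phi|^2/\phi$. This yields, at $x_0$, an inequality of the form
\begin{equation*}
\frac{2}{n}\,G\le C(n,\epsilon)\,R^{-1}G^{1/2}+C(n,\epsilon)\big(R^{-2}+K\big),
\end{equation*}
where I have also used $R^{-1}\sqrt K\le\frac12(R^{-2}+K)$. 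Young's inequality applied to the first term on the right absorbs half of the left-hand side, giving $G(x_0)\le C(n,\epsilon)(R^{-2}+K)$. Since $\phi\equiv1$ on $B_{\epsilon R}$, this bounds $w=|\nabla u|^2/u^2$ there, and taking square roots finishes the proof with $\tilde C$ depending only on $n$ and $\epsilon$.

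The main obstacle is the bookkeeping in this last step: one must track all constants through the several Cauchy--Schwarz and Young absorptions so that the coefficient of $G$ on the left genuinely survives and the remainder depends only on $n$ and $\epsilon$; and one must handle the potential non-smoothness of $r$ at $x_0$ via Calabi's trick — noting that replacing $r$ by an upper barrier only raises $\phi$ while fixing its value at $x_0$, so $x_0$ stays a maximum and the comparison estimates still apply. Neither point is conceptually deep, but an imprecise treatment of either is where the argument would break.
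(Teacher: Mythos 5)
This theorem is cited by the paper from Yau's 1975 paper (\cite{MR0431040}) without proof, so there is no in-paper argument to compare against. Your sketch is the standard Bochner/maximum-principle proof and is essentially correct: the differential inequality for $w=|\nabla\log u|^2$, the cutoff $\phi=\eta\circ r$ with $|\nabla\phi|^2/\phi\lesssim R^{-2}$ and $\Delta\phi\gtrsim-(R^{-2}+R^{-1}\sqrt K)$, the identity $\nabla w=-(w/\phi)\nabla\phi$ at the interior maximum of $G=\phi w$, and the Young absorption all go through exactly as you describe and produce $G(x_0)\le C(n,\epsilon)(R^{-2}+K)$.

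One small slip in your description of Calabi's trick: since $\eta$ is nonincreasing, replacing $r$ by an upper barrier $\tilde r\ge r$ with $\tilde r(x_0)=r(x_0)$ \emph{lowers} $\phi$ (hence lowers $\tilde G=\eta(\tilde r)\,w$) away from $x_0$ while leaving its value at $x_0$ unchanged; that is precisely why $x_0$ remains a maximum of $\tilde G$. You wrote that it ``raises $\phi$,'' which has the sign backwards and, taken literally, would spoil the maximality of $x_0$. The rest of the bookkeeping is fine.
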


Applying Theorem \ref{gradient estimate} on our manifold $M$ we get the following corollary.

\begin{corollary}
\label{local gradient estimate}
Let $M$ be a complete, simply connected $n$-dimensional Riemannian manifold with $-Ce^{(2/3-2\delta)r} \leq K_M \leq -1$. If $u$ is a positive harmonic function on $M$, then 
\begin{displaymath}
|\dfrac{\nabla u}{u}|(x) \leq \tilde{C}e^{(1/3-\delta/2)r(x)},
\end{displaymath}
where $\tilde{C}$ depends only on $n$, $C$ and $\delta$.
\end{corollary}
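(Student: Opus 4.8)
The plan is to derive Corollary \ref{local gradient estimate} as a direct application of Yau's gradient estimate (Theorem \ref{gradient estimate}), with the curvature bound $-Ce^{(2/3-2\delta)r} \leq K_M \leq -1$ feeding in the lower Ricci bound on a suitably chosen geodesic ball centered at the point $x$ in question. First I would fix $x \in M$ and set $R = R(x)$ to be a radius comparable to $e^{-(1/3-\delta/2)r(x)}$ — more precisely, choose $R$ small enough (e.g. $R = e^{-(1/3-\delta)r(x)}$, or a fixed multiple thereof) so that the ball $B_x(R)$ is contained in, say, $B_p(r(x)+1)$ for $r(x)$ large. Then on $B_x(R)$ the sectional curvature satisfies $K_M \geq -Ce^{(2/3-2\delta)(r(x)+1)}$, hence the Ricci curvature is bounded below by $-(n-1)K$ with $K = C' e^{(2/3-2\delta)r(x)}$ for a constant $C'$ absorbing the harmless factor $e^{(2/3-2\delta)}$. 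Note $\sqrt{K} = \mathrm{O}(e^{(1/3-\delta)r(x)})$.

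Next I would apply Theorem \ref{gradient estimate} with, say, $\epsilon = 1/2$, to the positive harmonic function $u$ on $B_x(R)$, evaluating the conclusion at the center $x \in B_{\epsilon R}(x)$. This yields
\begin{displaymath}
\frac{|\nabla u|}{u}(x) \leq \tilde{C}\left(\frac{1}{R} + \sqrt{K}\right).
\end{displaymath}
With the choice above, $1/R = e^{(1/3-\delta)r(x)}$ and $\sqrt{K} = \mathrm{O}(e^{(1/3-\delta)r(x)})$, so the right side is $\mathrm{O}(e^{(1/3-\delta)r(x)})$, which is certainly $\mathrm{O}(e^{(1/3-\delta/2)r(x)})$ since $\delta > 0$. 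The constant $\tilde C$ from Yau's theorem depends only on $n$ and $\epsilon = 1/2$, and the extra constants introduced depend only on $C$ and $\delta$, giving the stated dependence. For the finitely many $x$ with $r(x)$ bounded (where the ball might poke past where our curvature estimate is clean, or where one simply wants a uniform statement), one covers the relevant compact region by finitely many balls and absorbs the resulting bound into $\tilde C$; alternatively the stated inequality is automatic there after enlarging $\tilde C$.

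The only mild subtlety — hardly an obstacle — is bookkeeping the radius: one must pick $R$ small enough that $B_x(R) \subset B_p(r(x)+1)$ so the hypothesis $K_M \geq -Ce^{(2/3-2\delta)r}$ translates into a clean global constant times $e^{(2/3-2\delta)r(x)}$ on the whole ball, yet large enough that $1/R$ does not dominate and spoil the exponent. The choice $R \asymp e^{-(1/3-\delta)r(x)}$ does both, since for $r(x) \geq 1$ this radius is less than $1$. I would also remark that Yau's theorem needs only a lower Ricci bound, which follows from the sectional curvature lower bound, and that it is applied to $u$ restricted to the ball (where it is still positive and harmonic), so no global positivity beyond what is assumed is needed. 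This completes the reduction; the rest is the elementary exponent comparison already sketched.
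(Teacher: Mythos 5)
Your proof is correct and establishes the stated bound, but it takes a genuinely different route from the paper's. The paper applies Yau's gradient estimate to a \emph{large} ball $B_p(R)$ centered at the base point $p$, with $R = C_1 r(x)$, $C_1 = \frac{2/3-\delta}{2/3-2\delta} > 1$, and $\epsilon = 1/C_1$, so that $B_{\epsilon R}(p) = B_p(r(x))$ contains $x$. On that larger ball the curvature lower bound degrades to $K_M \geq -Ce^{(2/3-\delta)r(x)}$, giving $\sqrt{K} = \mathrm{O}(e^{(1/3-\delta/2)r(x)})$, while $1/R = \mathrm{O}(1/r(x))$ is negligible; the paper's exponent $1/3-\delta/2$ thus comes entirely from $\sqrt{K}$. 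You instead use a \emph{small} ball $B_x(R)$ centered at the evaluation point with $R \le 1$ and $\epsilon$ fixed; there the curvature lower bound remains $K_M \geq -C'e^{(2/3-2\delta)r(x)}$, so $\sqrt{K} = \mathrm{O}(e^{(1/3-\delta)r(x)})$, yielding the strictly sharper exponent $1/3-\delta$ (which you then throw away by observing it implies the stated $1/3-\delta/2$). The trade-off the paper is navigating is that taking $C_1 \to 1$ would recover exponent $1/3-\delta$ but forces $\epsilon \to 1$ and hence an unbounded Yau constant; your centering at $x$ with fixed $\epsilon$ sidesteps this entirely, and is arguably cleaner. Two minor points: the phrase ``the finitely many $x$ with $r(x)$ bounded'' should read ``the compact region where $r(x) \le R_0$'' (it is not a finite set, though your intended argument, a single application of Yau on a fixed-radius ball covering it, is correct); and there is no need for the shrinking radius $R = e^{-(1/3-\delta)r(x)}$ — a fixed radius such as $R=1$ gives $1/R = \mathrm{O}(1)$, which is dominated by $\sqrt{K}$ anyway, and produces the same exponent with less bookkeeping.
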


\begin{proof}
For every $x \in M$, let 
$$R=\dfrac{2/3-\delta}{2/3-2\delta}\cdot r(x)=C_1 r(x),$$
with $C_1>1$.
We have
\begin{displaymath}
K_M \geq -C e^{(2/3-2\delta)R}=-C e^{(2/3-\delta)r} 
\end{displaymath}
on $B_p(R)$.

Apply Theorem \ref{gradient estimate} with $\epsilon=1/C_1$ to obtain
\begin{displaymath}
|\dfrac{\nabla u}{u}| \leq \tilde{C}e^{(1/3-\delta/2)}
\end{displaymath}
on $B_p(R/C_1)=B_p(r(x))$.
\end{proof}

\begin{lemma}
\label{weak harnack}
Let $u$ be a positive harmonic function on $C_p(\theta_3)$ which is continuous on  $\overline{C_p(\theta_3)}$ and which vanishes on $\overline{C_p(\theta_3)} \cap S(\infty)$. Then for all $x \in T(\theta_2,1)$,
\begin{displaymath}
u(x) \leq \tilde{C}e^{-\alpha r(x)} \sup_{\partial C_p(\theta_3)} u,
\end{displaymath}
where $\alpha$ is a constants depending only on $n$, $C$, $\delta$, $\theta_1$ and $\theta_2$.
\end{lemma}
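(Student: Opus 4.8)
The plan is to follow the Anderson--Schoen maximum-principle argument, using a comparison (barrier) function built from the cut-off function $\varphi$ of Lemma \ref{cut-off} together with the radial function $e^{-\alpha r}$. Fix $\theta_2<\theta_3<\theta_1$ as in the statement. Since $u$ vanishes continuously on $\overline{C_p(\theta_3)}\cap S(\infty)$ and is continuous on the compact set $\overline{C_p(\theta_3)}\cap \overline{B_p(R)}$ for every $R$, the idea is to compare $u$ on the truncated cone $T(\theta_3,R_0)$ against a superharmonic function that dominates it on the two pieces of the boundary, namely the lateral boundary $\partial C_p(\theta_3)\setminus B_p(R_0)$ and the inner cap $C_p(\theta_3)\cap \partial B_p(R_0)$, and which goes to $0$ at infinity.

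First I would apply Lemma \ref{cut-off} with the roles of $\theta_1,\theta_2$ played by $\theta_3,\theta_2$ (legitimate since $\theta_2<\theta_3<\pi/4$), choosing the boundary values so that $\varphi=0$ on $T(\theta_2,R_0)$ and $\varphi=1$ on $\partial C_p(\theta_3)\setminus B_p(R_0)$, with $0\le\varphi\le1$, $|\nabla\varphi|=\mathrm{O}(e^{-r})$ and $|\Delta\varphi|=\mathrm{O}(e^{-(2/3+\delta)r})$ on $T(\theta_2,R_0)$. Set $w=e^{-\alpha r}+\varphi$ and consider the candidate barrier $\Psi = A(\sup_{\partial C_p(\theta_3)}u)\,w$ for a large constant $A$. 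The key computation is to check $\Delta\Psi\le 0$ on $T(\theta_2,R_0)$: we have $\Delta(e^{-\alpha r})=e^{-\alpha r}(-\alpha\Delta r+\alpha^2|\nabla r|^2)$, which by $\Delta r\ge n-1$ is $\le e^{-\alpha r}(-(n-1)\alpha+\alpha^2)$, a strictly negative multiple of $e^{-\alpha r}$ once $\alpha<n-1$; and since $e^{-\alpha r}$ dominates $e^{-(2/3+\delta)r}$ provided $\alpha<2/3+\delta$, we can absorb $|\Delta\varphi|=\mathrm{O}(e^{-(2/3+\delta)r})$ and still get $\Delta w<0$ on $T(\theta_2,R_0)$. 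On the annular region $T(\theta_3,R_0)\setminus T(\theta_2,R_0)$ (i.e. where $\theta_2\le\angle(px,\omega)\le\theta_3$), $\varphi$ need not be harmonic-dominating, but there $|\Delta\varphi|$ is still $\mathrm{O}(e^{-(2/3+\delta)r})$ by the lemma and the same absorption works; the point is that Lemma \ref{cut-off}'s estimates hold on all of $T(\theta_2,R_0)$, which contains $T(\theta_3,R_0)$. So $\Psi$ is superharmonic on $T(\theta_3,R_0)$.

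Next I would check the boundary domination. On the lateral part $\partial C_p(\theta_3)\setminus B_p(R_0)$ we have $\varphi=1$, so $\Psi\ge A\sup_{\partial C_p(\theta_3)}u\ge u$; on the inner cap $C_p(\theta_3)\cap\partial B_p(R_0)$, $e^{-\alpha r}=e^{-\alpha R_0}$ is a fixed positive number, so choosing $A\ge e^{\alpha R_0}$ gives $\Psi\ge A e^{-\alpha R_0}\sup_{\partial C_p(\theta_3)}u\ge u$; and as $r\to\infty$ both $u$ and $\Psi$ tend to $0$. By the maximum principle on the (noncompact but with controlled behavior at infinity) region $T(\theta_3,R_0)$ we conclude $u\le\Psi=A(\sup_{\partial C_p(\theta_3)}u)(e^{-\alpha r}+\varphi)$ throughout $T(\theta_3,R_0)$. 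Restricting to $T(\theta_2,1)$: there $\varphi=0$ (after possibly shrinking $\alpha$ so that $T(\theta_2,1)\subset T(\theta_2,R_0)$ causes no trouble — one handles $1\le r\le R_0$ separately by the classical Harnack inequality and continuity, which only changes the constant $\tilde C$), hence $u(x)\le \tilde C e^{-\alpha r(x)}\sup_{\partial C_p(\theta_3)}u$, which is exactly the assertion of Lemma \ref{weak harnack}.

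The main obstacle I anticipate is making the maximum principle on the unbounded truncated cone rigorous: one must exhaust $T(\theta_3,R_0)$ by the compact pieces $T(\theta_3,R_0)\cap B_p(R_j)$, apply the ordinary maximum principle there using that $u-\Psi\le 0$ on the outer cap $C_p(\theta_3)\cap\partial B_p(R_j)$ (true because $u$ is bounded while $e^{-\alpha R_j}+\varphi$ is bounded below by... no — rather because $\sup_{C_p(\theta_3)\cap\partial B_p(R_j)}u\to 0$ by continuity of $u$ on $\overline M$ and its vanishing at infinity, while $\Psi\ge 0$), and then let $R_j\to\infty$. A secondary technical point is bridging the gap between $r=1$ and $r=R_0$: on the compact set $\overline{T(\theta_2,1)}\cap\overline{B_p(R_0)}$ the interior/boundary Harnack inequality of the classical theory bounds $u$ there by a constant times $\sup_{\partial C_p(\theta_3)}u$, which is absorbed into $\tilde C$. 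Verifying the $\Delta\varphi$ estimate claimed in Lemma \ref{cut-off} itself — which the excerpt defers as ``very similar'' to the proof of Theorem \ref{main} — is where the curvature hypothesis $\lambda<2/3$ is genuinely used, via the Bishop volume comparison (Corollary \ref{volume}) and Hessian comparison (Corollary \ref{Laplacian}) applied on balls of radius $e^{(-1/3+\delta)r}$; but since Lemma \ref{cut-off} is already stated, I may invoke it directly.
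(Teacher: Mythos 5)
Your argument is correct and mirrors the paper's own proof: compare $u/\sup_{\partial C_p(\theta_3)}u$ against the superharmonic barrier built from Lemma~\ref{cut-off} plus $e^{-\alpha r}$, using $\Delta r \ge n-1$ to make $e^{-\alpha r}$ strictly superharmonic and absorbing the $\mathrm{O}(e^{-(2/3+\delta)r})$ contribution of $\Delta\varphi$ once $\alpha<2/3+\delta$ and $r\ge R_0$ is large, then conclude via the maximum principle and the vanishing of $u$ at infinity.

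One inaccuracy worth flagging: you assert that $T(\theta_2,R_0)$ contains $T(\theta_3,R_0)$, but the inclusion is the reverse, since $\theta_2<\theta_3$ gives $C_p(\theta_2)\subset C_p(\theta_3)$. What the argument needs, and what the paper actually uses in the proofs of Lemma~\ref{weak harnack} and Theorem~\ref{Harnack 1}, is that the gradient and Laplacian estimates of Lemma~\ref{cut-off} hold on the \emph{larger} truncated cone $T(\theta_3,R_0)$ (and $T(\theta_1,R_0)$ for the later theorems); as printed, Lemma~\ref{cut-off} states those estimates on $T(\theta_2,R_0)$, where $\varphi\equiv\alpha$ and they are vacuous, which is evidently a typo in the paper. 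Also, a small technical point on the exhaustion step: for finite $R_j$ you do not literally have $u-\Psi\le 0$ on the outer cap, since $\Psi$ can be arbitrarily small near the axis of the cone while $u>0$; the standard repair is to compare $u-\epsilon$ with $\Psi$ on $T(\theta_3,R_0)\cap B_p(R_j)$ for $R_j$ so large that $u<\epsilon$ on $\partial B_p(R_j)\cap C_p(\theta_3)$, and then let $\epsilon\to 0$. Neither issue changes the substance of the proof.
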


\begin{proof}
By Lemma \ref{cut-off}, there exists $\varphi \in C^{\infty}(M)$ and a constant $R_1>0$ such that
\begin{equation*}
\left \{
\begin{array}{rcl}
\varphi &=& 0  \text{ on } T(\theta_2,R_1),\\
\varphi &=& 1 \text{ on } \partial C_p(\theta_3)\setminus B_p(R_1),\\
|\nabla \varphi| &=& \mathrm{O}(e^{-r}) \text{ on } T(\theta_3,R_1),\\
|\Delta \varphi| &=& \mathrm{O}(e^{-(2/3+\delta)r}) \text{ on } T(\theta_3,R_1).
\end{array} \right.
\end{equation*}

Consider $e^{-\alpha r}$, where $\alpha<\dfrac{2}{3}+\delta$ is sufficiently small. We have
\begin{eqnarray*}
\Delta e^{-\alpha r}&=&e^{-\alpha r}(\alpha^2-\alpha \Delta r)\\
&\leq& e^{-\alpha r}(\alpha^2-(n-1)\alpha)<0.
\end{eqnarray*}

Since $\Delta \varphi=\mathrm{O}(e^{-(2/3+\delta)r})$, we have $|\Delta \varphi|<-C_1 \Delta   e^{-\alpha r}$ on $T(\theta_3,R_0)$ for $C_1$ and $R_0$ sufficiently large.

Let $f=\varphi +\tilde{C}e^{-\alpha r}$. We have $\Delta f \leq 0$ on $T(\theta_3,R_0)$ if $\tilde{C}>C_1$. Also $f \geq 1$ on $\partial T(\theta_3,R_0)$, provided $\tilde{C}$ is sufficiently large.

Now consider $\bar{u}=u / \sup_{\partial C_p(\theta_3)} u$. $\bar{u}$ is harmonic and $\bar{u} \leq 1$ on $C_p(\theta_3)$. We have $\Delta(\bar{u}-f)\geq 0$ on $T(\theta_3, R_0)$ and $\bar{u}-f \leq 0$ on $\partial T(\theta_3, R_0)$. By the maximum principle, $\bar{u} \leq f$ on $T(\theta_3, R_0)$. In particular,
$$u(x) \leq f(x) \sup_{\partial C_p(\theta_3)} u=\tilde{C}e^{-\alpha r} \sup_{\partial C_p(\theta_3)} u$$
for all $x \in T(\theta_2,R_0)$.

The truncated cones $T(\theta_2,R_0)$ and $T(\theta_2,1)$ differ by a precompact set, by the Harnack inequality the estimate holds on $T(\theta_2,1)$ with a larger $\tilde{C}$.
\end{proof}

We are now ready to prove Theorem \ref{Harnack 1}. By Lemma \ref{weak harnack}, it is sufficient to show that harmonic functions satisfying the given conditions and $u(p')=1$ are uniformly bounded on $\partial C_p(\theta_3)$. In the following we will use $C_1, C_2, \dots$, $\alpha_1, \alpha_2, \dots$ and $R_1, R_2, \dots$ to denote  positive constants depending only on $n$, $C$, $\delta$, $\theta_1$ and $\theta_2$.

By Lemma \ref{cut-off}, there exists $\varphi \in C^{\infty}(M)$ with $\dfrac{2}{3} \leq \varphi \leq 1$ and a constant $R_0>0$ such that
\begin{equation*}
\left \{
\begin{array}{rcl}
\varphi &=& \dfrac{2}{3}  \text{ on } T(\theta_3,R_0),\\
\varphi &=& 1 \text{ on } \partial C_p(\theta_1) \setminus B_p(R_0),\\
|\nabla \varphi| &=& \mathrm{O}(e^{-r}) \text{ on } T(\theta_1,R_0),\\
|\Delta \varphi| &=& \mathrm{O}(e^{(-2/3-\delta)r}) \text{ on } T(\theta_1,R_0).
\end{array} \right.
\end{equation*}

Consider the function $u^\varphi$. Direct computation gives
\begin{equation}
\nabla u^\varphi=u^\varphi(\log u \nabla \varphi+\varphi \nabla \log u),
\end{equation}
\begin{equation}
\Delta u^\varphi=u^\varphi(|\log u \nabla \varphi+\varphi \nabla \log u|^2+\log u \Delta \varphi+2 \nabla \varphi \cdot \nabla \log u+\varphi \Delta \log u). \label{Delta}
\end{equation}

Using Corollary \ref{local gradient estimate}, we have
\begin{equation}
|\nabla \log u|=\mathrm{O}(e^{(1/3-\delta/2)r}) \label{gradient log u}
\end{equation}
and
\begin{eqnarray}
\label{log u}|\log u(x)|&=&|\log u(x) -\log u(p')|\\
\nonumber &\leq& \int_\gamma |\nabla \log u|\\
\nonumber&\leq& \sup_{B_p(r(x))} |\nabla \log u| \cdot  d(x,p')\\
\nonumber&\leq& \tilde{C}e^{(1/3-\delta/2)r(x)}(r+1)\\
\nonumber&=& \mathrm{o}(e^{r/3}),
\end{eqnarray}
where $\gamma$ is the geodesic segment connecting $x$ and $p'$.

Observe that
$$\Delta \log u=\frac{\Delta u}{u}-\frac{|\nabla u|^2}{u^2}=-|\nabla \log u|^2.$$
Therefore
\begin{equation}
\label {Delta u varphi} \Delta u^\varphi \leq u^\varphi(C_1 e^{(-1/3-\delta/2)r}+(\varphi^2-\varphi)|\nabla \log u|^2).
\end{equation}

Let $\psi \in C^\infty(\mathbb{R})$ be a function such that
\begin{equation*}
\left \{
\begin{array}{rl}
&1 \leq \psi \leq C_2,\\
&\psi'(t)=-\dfrac{1}{|t|\log^2 |t|} \text{ for } |t| \geq R_1,\\
&-\psi'(t) \geq 12|\psi''(t)| \geq 0 \text{ for } \text{ for all } t.
\end{array} \right.
\end{equation*}
Such a function could be constructed by elementary calculas.

Let $\xi=\log u$. Set
$$F(x)=\psi(\xi(x)-e^{-\beta r(x)}) \cdot u^\varphi,$$
where $\beta$ is a positive number to be determined later. We have for $\psi=\psi(\xi(x)-e^{-\beta r(x)})$,
$$\nabla \psi=\psi' \cdot (\nabla \xi+\beta e^{-\beta r} \nabla r)  ,$$
\begin{eqnarray*}
\Delta \psi&=&\psi'' \cdot |\nabla \xi+\beta e^{-\beta r} \nabla r|^2+\psi'\cdot (-|\nabla \xi|^2+ e^{-\beta r} (\beta \Delta r-\beta^2)).
\end{eqnarray*}
Therefore
\begin{eqnarray*}
\Delta F &=&\psi \Delta u^\varphi +\Delta \psi u^\varphi  +2\nabla \psi \cdot \nabla u^\varphi\\
&=& \psi \Delta u^\varphi +\psi'' u^\varphi\cdot |\nabla \xi+\beta e^{-\beta r} \nabla r|^2\\
&&+\psi' u^\varphi \cdot ((2\varphi-1)|\nabla \xi|^2+2\xi \nabla \varphi \cdot \nabla \xi +2\beta  e^{-\beta r} \xi \nabla \varphi \cdot \nabla r\\
&&+2\beta \varphi  e^{-\beta r} \nabla \xi \cdot \nabla r+ e^{-\beta r} (\beta \Delta r-\beta^2)).
\end{eqnarray*}

Using \eqref{gradient log u} \eqref{log u} and \eqref{Delta u varphi}, we obtain the following estimate
\begin{eqnarray}
\nonumber \Delta F &\leq& u^\varphi[C_1 C_2 \psi e^{(-1/3-\delta/2) r} +2\beta^2 e^{-2\beta r}|\psi''|+\psi' \cdot( -C_3e^{(-1/3-\delta/2) r}\\
\nonumber&&+e^{-\beta r} (\beta \Delta r-\beta^2-\beta \varphi))]+u^\varphi|\nabla \xi|^2[2|\psi''|+(2\varphi-1-\beta \varphi e^{-\beta r})\psi' ]\\
\nonumber&\leq& u^\varphi (C_4 e^{(-1/3-\delta/2) r}+\psi' \cdot (\beta \Delta r-\beta^2-\beta) e^{-\beta r})+ u^\varphi \psi' |\nabla \xi|^2(\dfrac{1}{6}-\beta e^{-\beta r}).
\end{eqnarray}
Here we have used the inequalities $2/3\leq \varphi \leq 1$ and $\nabla \xi \cdot \nabla r \leq |\nabla \xi|^2+1$.

Since $\Delta r \geq n-1$, we can take $\beta<\delta/4$ sufficiently small so that
\begin{equation}
 (\beta \Delta r-\beta^2-\beta) e^{-\beta r} \geq C_5 e^{-\beta r}.
\end{equation}

At points $x$ such that $\xi=\xi(x) \geq R_1+1$, from \eqref{log u} we have
\begin{equation}
\xi=\mathrm{o}(e^{r/3}) \textrm{  and  } \log \xi=\mathrm{O}(r),
\end{equation}
Together with the definition of $\psi$, this yields
\begin{eqnarray*}
\psi' \cdot (\beta \Delta r-\beta^2-2\beta) e^{-\beta r} &\leq& -\dfrac{1}{|\xi|\log^2|\xi|} C_5 e^{-\beta r}\\
&\leq&-C_6 \dfrac{1}{r^2} e^{-(1/3+\beta)r}\\
&\leq& -C_7 e^{-(1/3+\delta/4)r}
\end{eqnarray*}
for $r$ sufficiently large. Here we have used that $e^{-(1/3+\beta)r}/r^2=O(e^{-(1/3+\delta/4)r})$ since $\beta<\delta/4$.

It follows that for $\xi\geq R_1+1$, we have 
\begin{eqnarray}
\label{Delta F} \Delta F &\leq& u^\varphi(C_4 e^{(-1/3-\delta/2) r}-C_7 e^{-(1/3+\delta/4)r})\\
\nonumber&&+u^\varphi \psi' |\nabla \xi|^2(\dfrac{1}{6}-2\beta e^{-\beta r}).
\end{eqnarray}
If $R_2$ is sufficiently large then $C_4 e^{(-1/3-\delta/2) r}-C_7 e^{-(1/3+\delta/4)r}<0$ and $\dfrac{1}{6}-2\beta e^{-\beta r}>0$ for $r \geq R_2$, so that $\Delta F \leq 0$ on $T(\theta_1,R_2)$.

The remaining case is when $\xi \leq R_1+1$. We then have $u=e^{\xi}\leq C_8=e^{R_1+1}$ is bounded. In addition, $u^\varphi \leq C_8 u^{1/2}$ and $u^\varphi |\xi| \leq C_9 u^{1/2}$. Using the fact that $|\psi|$, $|\psi'|$ and $|\psi''|$ are all bounded, we conclude that
\begin{displaymath}
\Delta F \leq C_{10} u^{1/2}|\nabla \xi|^2+C_{11}e^{-\beta r}.
\end{displaymath}

Define
\begin{equation}
G=F+C_{12} u^{1/2}+e^{-\alpha_1 r}.
\end{equation}
This is a positive function with $\alpha_1<\beta$ and $C_{12}$ to be determined. It is clear that $G_F=C_{12} u^{1/2}+e^{-\alpha_1 r}$ is superharmonic. Therefore
\begin{eqnarray}
\label{Delta G} \Delta G&=&\Delta F +C_{12}u^{1/2}(-\dfrac{1}{4}|\nabla \xi|^2)+e^{-\alpha r}(\alpha^2-\alpha \Delta r)\\
\nonumber&\leq& C_{10} u^{1/2}|\nabla \xi|^2+C_{11}e^{-\beta r})-\dfrac{C_{12}}{4}u^{1/2}|\nabla \xi|^2-C_{13} e^{-\alpha r}\\
\nonumber&=& (C_{10}-\dfrac{C_{12}}{4})u^{1/2}|\nabla \xi|^2+C_{11}e^{-\beta r}-C_{13} e^{-\alpha r}\\
\nonumber&\leq& 0
\end{eqnarray}
on $T(\theta_1,R_3)$ if $C_{12},C_{13}$ and $R_3$ are sufficiently large.

Combine \eqref{Delta F}, \eqref{Delta G} and the superharmonicity of $G-F$, we have for all $\xi$, $$\Delta G \leq 0$$ on $T(\theta_1,R_0)$ with $R_0=\max(R_2,R_3)$.

Since $u$ is harmonic, $$ \Delta (C_{14} G-u) \leq 0$$  on $T(\theta_1,R_0)$, where $C_{14}>1$ is a constant to be determined. Observe that $F=\psi u \geq u$ on $\partial C_p(\theta_1)$. Therefore
\begin{equation*}
C_{14} G-u \geq C_{14} F-u \geq 0
\end{equation*} 
on $\partial C(\theta_1)\setminus B_p(R_3)$. If $C_{14}$ is sufficiently large, we also have 
\begin{equation*}
C_{14} G-u \geq 0 
\end{equation*}
on $\partial B_p(R_3) \cap C(\theta_1)$. By the maximum principle,
\begin{equation*}
C_{14} G \geq u 
\end{equation*} 
on $T(\theta_1,R_0)$. In particular, on $T(\theta_3,R_0)$ we have
\begin{eqnarray*}
u \leq C_{14}G &=&C_{14} \psi u^{2/3}+C_{12} C_{14} u^{1/2}+e^{-\alpha_1 r} \\
&\leq& C_1 C_{14}u^{2/3}+C_9 C_{10} u^{1/2}+e^{-\alpha_1 r},
\end{eqnarray*}
which implies that $u$ is bounded on $T(\theta_3,R_0)$. By the gradient estimate $u$ is also bounded on $\overline{C_p(\theta_3) \cap B_p(R_0)}$. Therefore, positive harmonic functions on $C_p(\theta_1)$ which vanish on $\overline{C_p(\theta_1)}\cap S(\infty)$ are uniformly bounded on $\overline{C_p(\theta_3)}$. Now applying Lemma \ref{weak harnack} we have for all $x \in T(\theta_2,R_0)$,
\begin{displaymath}
u(x) \leq \tilde{C}e^{-\alpha r(x)} u(p').
\end{displaymath}

By the gradient estimate for the harmonic function $u$ , the Harnack inequality above is true on $T(\theta_2,1)$ with a larger $\tilde{C}$. This completes the proof of Theorem \ref{Harnack 1}.

\begin{proof}[Proof of Theorem \ref{Harnack 2}]
Without loss of generality we may assume that $u(p')=v(p')=1$.

By Theorem \ref{Harnack 1}, we have
\begin{equation}
\label{u,v bound} u, v \leq C_1 e^{-\alpha_1 r}
\end{equation}
on $T(\theta_3,1)$.

Let $\xi=-\log u$. From the gradient estimate we have
\begin{equation}
\label{nabla xi}\nabla \xi=\mathrm{O}(e^{(1/3-\delta/2)r})
\end{equation}
and
\begin{equation}
\label{xi}\xi=\mathrm{o}(e^{r/3}).
\end{equation}
Thus we have 
\begin{equation}
\label{upper lower} C_2 r \leq \xi \leq C_3 e^{r/3}
\end{equation}
on $T(\theta_3,R_1)$. It follows from \eqref{u,v bound} and \eqref{xi} that
\begin{equation}
\xi^{-\epsilon} \geq e^{-\epsilon r/3} \geq C_4 v
\end{equation}
for $\epsilon>0$ sufficiently small.

We will construct a function $F \in C^\infty(C_p(\theta_3))$ satisfying

It will then follow from the maximum principle that $v \leq F$ on $T(\theta_3,R_0)$. In particular, $v \leq C_5 u$ on $T(\theta_2,R_0)$, which gives the first inequality in Theorem \ref{Harnack 2}. By exchanging $u$ and $v$ we get the second inequality immediately.

We now proceed to construct $F$ satisfying (i),(ii) and (iii). By Lemma \ref{cut-off}, there exists $\varphi \in C^{\infty}(M)$ with $0\leq \varphi \leq 1$ such that
\begin{equation}
\label{varphi}\left \{
\begin{array}{rcl}
\varphi &=& 0  \text{ on } T(\theta_2,R_1),\\
\varphi &=& 1 \text{ on } \partial C_p(\theta_3)\setminus B_p(R_1),\\
|\nabla \varphi| &=& \mathrm{O}(e^{-r}) \text{ on } T(\theta_3,R_1),\\
|\Delta \varphi| &=& \mathrm{O}(e^{-(2/3+\delta)r}) \text{ on } T(\theta_3,R_1).
\end{array} \right.
\end{equation}

Consider the function $f=u^{1-\varphi}\xi^{-\epsilon \varphi}$. We have 
\begin{eqnarray*}
f&=&u \text{ on } T(\theta_2,R_1),\\
f&=& \xi^{-\epsilon} \geq C_4 v \text{ on } \partial C_p(\theta_3)\setminus B_p(R_1),
\end{eqnarray*}
and
\begin{equation}
\nabla f=f\cdot(\xi \nabla \varphi-(1-\varphi)\nabla \xi -\epsilon \log \xi \nabla \varphi -\epsilon \varphi \nabla \log\xi),
\end{equation}
\begin{eqnarray*}
\Delta f&=&f \cdot (|\xi \nabla \varphi-(1-\varphi)\nabla \xi -\epsilon \log \xi \nabla \varphi -\epsilon \varphi \nabla \log\xi|^2\\
&&+\xi \Delta \varphi +2\nabla \varphi \cdot \nabla \xi-(1-\varphi)\Delta \xi-\epsilon \log \xi \Delta \varphi-2\epsilon \nabla \log \xi \cdot \nabla \varphi-\epsilon \varphi \Delta \log \xi).
\end{eqnarray*}

Observe that
\begin{equation}
\Delta \xi= -\dfrac{\Delta u}{u}+|\nabla \xi|^2=|\nabla \xi|^2,
\end{equation}
and
\begin{equation}
\Delta \log \xi=\dfrac{\Delta \xi}{\xi}-\dfrac{|\nabla \xi|^2}{\xi^2}=\dfrac{|\nabla \xi|^2}{\xi}-|\nabla \log \xi|^2.
\end{equation}
Therefore by  \eqref{nabla xi}, \eqref{upper lower} and \eqref{varphi} we have
\begin{equation}
\Delta f \leq f \cdot [(\varphi^2-\varphi) |\nabla \xi|^2+\epsilon \varphi(1-2\varphi)\dfrac{|\nabla \xi|^2}{\xi}+(\epsilon^2 \varphi^2+\epsilon \varphi)|\nabla \log \xi|^2+C_6 e^{-(1/3+\delta/2)r}].
\end{equation}

Let $\psi \in C^\infty(\mathbb{R})$ be a function such that
\begin{equation*}
\left \{
\begin{array}{rl}
&1 \leq \psi \leq C_7,\\
&\psi'(t)=\dfrac{1}{|t|\log^2 |t|} \text{ for } |t| \geq R_2,\\
&\psi'(t) \geq 12|\psi''(t)| \geq 0 \text{ for } \text{ for all } t.
\end{array} \right.
\end{equation*}

Set
\begin{equation}
F(x)=\psi(\xi+e^{-\beta r(x)}) \cdot f.
\end{equation}
We have for $\psi=\psi(\xi+e^{-\beta r(x)})$,
$$\nabla \psi=\psi' \cdot (\nabla \xi-\beta e^{-\beta r} \nabla r),$$
\begin{eqnarray*}
\Delta \psi&=&\psi'' \cdot |\nabla \xi-\beta e^{-\beta r} \nabla r|^2+\psi'\cdot (\Delta \xi+e^{-\beta r} (\beta^2-\beta \Delta r)).
\end{eqnarray*}

Then we have
\begin{eqnarray*}
\Delta F &=&\psi \Delta f +\Delta \psi f  +2\nabla \psi \cdot \nabla f\\
&=& \psi \Delta f + \psi'' f\cdot |\nabla \xi -\beta e^{-\beta r} \nabla r|^2 \\
&&+\psi'f \cdot (-|\nabla \xi|^2 +e^{-\beta r}(\beta^2-\beta \Delta r)) +2\psi' \cdot(\nabla \xi-\beta e^{-\beta r} \nabla r)\cdot \nabla f\\
&\leq& \psi f [(\varphi^2-\varphi) |\nabla \xi|^2+\epsilon \varphi(1-2\varphi)\dfrac{|\nabla \xi|^2}{\xi}+(\epsilon^2 \varphi^2+\epsilon \varphi)|\nabla \log \xi|^2\\
&&+C_6 e^{-(1/3+\delta/2)r}]+\psi' f[(2\varphi-3)|\nabla \xi|^2-2\epsilon \varphi \dfrac{|\nabla \xi|^2}{\xi}+2\beta (1-\varphi)e^{-\beta r}\nabla \xi \cdot \nabla r\\
&&+2\epsilon \beta \varphi e^{-\beta r} \nabla \log \xi \cdot \nabla r+C_8 e^{-\beta r} (\beta^2-\beta \Delta r)]\\
&\leq& \psi f [\varphi(\varphi-1) |\nabla \xi|^2+\varphi(\epsilon -2\epsilon \varphi)\dfrac{|\nabla \xi|^2}{\xi}+\varphi(\epsilon^2 \varphi+\epsilon)|\nabla \log \xi|^2\\
&&+C_6 e^{-(1/3+\delta/2)r}]+\psi' f[(2\varphi-3+2\beta(1-\varphi+\epsilon \varphi))|\nabla \xi|^2\\
&&+C_9 e^{-2\beta r}+C_8 e^{-\beta r} (\beta^2-\beta \Delta r)].
\end{eqnarray*}
Here we have used the inequalities $e^{-\beta r}\nabla \xi \cdot \nabla r \leq |\nabla\xi|^2+e^{-2\beta r}$ and $|\nabla \xi|^2/\xi \leq  |\nabla \xi|^2$.

As in the proof of Theorem \ref{Harnack 1}, we can choose $\beta<\delta/4$ to be sufficiently small and $R_0>\max\{R_1,R_2\}$ sufficiently large so that
\begin{eqnarray*}
\Delta F &\leq& f[C_6 C_7 e^{-(1/3+\delta/2)r}-C_{10}e^{-\beta r}/(\xi \log^2 \xi)]\\
&\leq& f[C_6 C_7 e^{-(1/3+\delta/2)r}-C_{11}e^{-(1/3+\delta/4)r}]\\
&\leq& 0
\end{eqnarray*}
on $T(\theta_3,R_0)$. This is possible because $\xi=\mathrm{o}(e^{r/3})$ and $\log \xi =\mathrm{o}(r)$.

We already know that
\begin{eqnarray*}
F= \psi f \geq C_4 v
\end{eqnarray*}
on $\partial C_p(\theta_3)\setminus B_p(R_0)$. Therefore $C_{12} F \geq v$ on $\partial T(\theta_3,R_0)$ if $C_{12}$ is sufficiently large. Since $v$ is harmonic, by the maximum principle, we have $C_{12} F \geq v$ on $\overline{T(\theta_3,R_0)}$. In particular,
\begin{displaymath}
v \leq C_{12} F \leq C_7 C_{12} u
\end{displaymath}
on $\overline{T(\theta_2,R_0)}$.

Since $u(p')=v(p')=1$, by the gradient estimate we have
$$C_{13} \leq u,v \leq C_{14}$$ on $B_p(R_0) \cap  T(\theta_2,1)$. Then $\dfrac{u}{v}\geq \tilde{C}^{-1}$ on $T(\theta_2,1)$ with $\tilde{C}=\max(\dfrac{C_{13}}{C_{14}},C_7 C_{12})$. 
\end{proof}
\emph{Remark 5.1} As remarked in Section 4, Theorem \ref{Martin} follows immediately from Theorem \ref{Harnack 1} and Theorem \ref{Harnack 2}.\\

\section{Proof of Theorem \ref{main2}}

In this section we will assume that on the complete, simply connected Riemannian manifold $M$, there exists $\alpha>2$ and $\beta<\alpha-2$ such that the following curvature conditions are satisfied:
$$K_M \leq -\dfrac{\alpha(\alpha-1)}{r^2}, \;\;\; Ric_M \geq -r^{2\beta},$$
where $r=d(p,\cdot)$ is the distance to the base point $p$.

\begin{lemma}
\label{angle}
Let $x,y$ be two points in $M$. Suppose that $d(p,x)=s$ and $y \in B_x(d)$ with $d<s$. We have $$\angle(px, py)<\dfrac{d}{(s-d)^\alpha}.$$
\end{lemma}

To prove Lemma \ref{angle} we apply the Topogonov comparison theorem on the rotationally symmetric model space $\mathbb{R}^2$ with pole $\tilde{p}$ and metric $\tilde{g}=dr^2+r^2\alpha d\phi^2$. It is easy to obtain that $(\mathbb{R}^2, \tilde{g})$ has sectional curvature $-\dfrac{\alpha(\alpha-1)}{r^2}$.

Let $\triangle \tilde{p}\tilde{x}\tilde{y}$ be the corresponding geodesic triangle in the model space. Following the argument of Lemma \ref{hyperbolic}, we have $\angle(\tilde{p}\tilde{x},\tilde{p}\tilde{y}) < \dfrac{d}{(s-d)^\alpha}$. Lemma \ref{angle} follows immediately from the Toponogov comparison theorem. 

Given $\varphi \in C^\infty(S_p)$, we extend it to $M \setminus \{p\}$ by defining $\varphi(r,\theta)=\varphi(\theta)$ for $r>0$. Let $d(x)=r^{-\beta}(x)$. By Lemma \ref{angle} we have $$\mathrm{osc}_{B_x(d)} \varphi \leq C \angle(px,py) = O(r^{-\alpha-\beta}).$$

Let $\chi \in C_0^\infty(\mathbb{R})$ be a function satisfying $0\leq \chi \leq 1$, $\chi(t)=0$ for $|t|\geq1$ and $\chi(t)=1$ for $|t| \leq 1/4$. We define the average $\bar{\varphi}$ of $\varphi$ in the ball $B_x(d)$ as $$\overline{\varphi}(x)=\dfrac{\int_M u(x,y) \varphi(y) dy}{\int_M u(x,y) dy},$$ where $u(x,y)=\chi(r^{2\beta}(x) \rho^2_x(y))$. We have\begin{eqnarray*}
|\overline{\varphi}(x)-\varphi(x)| &=& \frac{\int_{B_x(d(x))} u(x,y) (\varphi(y)-\varphi(x)) dy}{\int_{B_x(d(x))} u(x,y) dy} \\
&\leq &\sup_{y \in B_x(d(x))} |\varphi(y)-\varphi(x)| \\
& =& O(r^{-\alpha-\beta}),
\end{eqnarray*}
which implies $\bar{\varphi}$ and $\varphi$ have the same value on $S(\infty)$. 

Let $$v(x)=\int_M u(x,y) dy,$$ 
we have $\mathrm{Vol}(B_x(d/2) )\leq v(x) \leq \mathrm{Vol}\left(B_x (d)\right)$.

To estimate $\Delta \bar{\varphi}$, we need the following corollary of the Hessian comparison theorem. We omit the proof here because the argument is similar to that of Corollary \ref{Laplacian}.
\begin{corollary}
\label{Laplacian2}
Let $M$ be as in Theorem \ref{main2}. Then we have
$$\dfrac{(n-1)\alpha}{r} \leq \Delta r \leq (n-1)(r^\beta+\dfrac{1}{r}).$$ \end{corollary}

Now we have
\begin{eqnarray}
\label{nabla u 2}
\nabla u &=& \chi'(r^{2\beta}\rho^2) \cdot (2\beta r^{2\beta-1} \rho^2 \nabla r +2r^{2\beta} \rho \nabla \rho) \\ 
&=& \nonumber \mathrm{O}(r^\beta),
\end{eqnarray}
and by Corollary \ref{Laplacian2} that
\begin{multline}
\label{laplacian u 2}
\Delta u = \chi''(r^{2\beta}\rho^2) \cdot (2\beta r^{2\beta-1} \rho^2 \nabla r +2r^{2\beta} \rho \nabla \rho)^2 + \chi'(r^{2\beta}\rho^2) \cdot (2\beta(2\beta-1)r^{2\beta-2}\rho^2 |\nabla r|^2\\
+2 \beta r^{2 \beta-1} \rho^2 \Delta r+8\beta r^{2\beta-1}\rho \nabla \rho \cdot \nabla r+2r^{2\beta}|\nabla \rho|^2+2r^{2\beta} \rho \Delta \rho) = \mathrm{O}(r^{2\beta}). \;\;\;\;\;\;\;\;\;\;\;\;\;\;\;\;\;\;
\end{multline}

Combining \eqref{u/v}, \eqref{nabla u 2} and \eqref{laplacian u 2} we obtain the following estimate:
\begin{eqnarray*}
|\Delta \overline{\varphi}(x)|&=&| \int_M \Delta (\frac{u}{v})(\varphi(y)-\varphi(x))  dy|\\
&\leq & \int_{B_x(d)} |\Delta(\frac{u}{v})| dy \cdot    \mathrm{osc}_{B_x(d)} \varphi \\
&\leq & \sup_{B_x(d)}\{|\Delta(\frac{u}{v})|\}\cdot \mathrm{Vol}(B_x(d(x))) \cdot    \mathrm{osc}_{B_x(d)} \varphi \\
& \leq &\sup_{B_x(d)} \{\frac{|\Delta u|}{v}+2\frac{|\nabla u \cdot \nabla v|}{v^2} +\frac{u |\Delta v|}{v^2}+\frac{2u}{v^3}|\nabla v|^2\} \cdot \mathrm{Vol}(B_x(d(x))) \cdot    \mathrm{osc}_{B_x(d)} \varphi \\
&=& \mathrm{O} \left(\frac{\mathrm{Vol}(B_x)}{\mathrm{Vol}\left(B_x\left(d/2\right)\right)}+\frac{(\mathrm{Vol}(B_x))^2}{\left(\mathrm{Vol}\left(B_x\left(d/2\right)\right)\right)^2}+\frac{(\mathrm{Vol}(B_x))^3}{(\mathrm{Vol}(B_x(d/2)))^3}\right) \cdot \mathrm{O}(r^{2\beta}) \cdot \mathrm{O}(r^{-\alpha-\beta}) \\
&=& \mathrm{O}(r^{-\alpha+\beta}).
\end{eqnarray*}

Consider the function $r^{-\delta}(x)$, where $\delta$ is a positive constant to be chosen later. By Corollary \ref{Laplacian2} We have
$$\Delta r^{-\delta}= \delta r^{-\delta-2}(\delta+1-r\Delta r) \leq \delta r^{-\delta-2}(\delta+1-(n-1)\alpha).$$

Since $\alpha-2>\beta$ and $\alpha>2$, we can choose $\delta<\alpha-2-\beta$ sufficiently small so that $\delta+1-(n-1)\alpha$ is negative. Then we have $$\Delta \overline{\varphi}=\mathrm{O}(r^{-\alpha+\beta})=\mathrm{o}(r^{-\delta-2})=\mathrm{o}(\Delta r^{-\delta}),$$ therefore there exists a constant $c>0$ such that $$\Delta(cr^{-\delta}) \leq -|\Delta \overline{\varphi}|.$$
It follows from the Perron's method again that there exists a harmonic function $f$ satisfying
$$\overline{\varphi}-cr^{-\delta} \leq f \leq \overline{\varphi}+cr^{-\delta}.$$
In particular, $f=\varphi$ on $S(\infty)$ and Theorem \ref{main2} follows.\\

\section{Proof of Theorem \ref{Martin2}}

Throughout this section $M$ is a complete, simply connected Riemannian manifold satisfying $K_M \leq -\dfrac{\alpha(\alpha-1)}{r^2}$ for some $\alpha>2$ and $Ric_M \geq -r^{2\beta}$ for some $\beta<\dfrac{\alpha-4}{3}$.

Given $\omega \in S_p$. Denote $p'=\mathrm{exp}_p \omega$. Let $0<\theta_2<\theta_1<\pi/4$ and $\theta_3=(\theta_1+\theta_2)/2$.

The proof of Theorem \ref{Martin2} is very similar to that of Theorem \ref{Martin}. As remarked in Section 4, it is sufficient to show the following two boundary Harnack inequalities.

\begin{theorem}
\label{Harnack 2.1}
Let $u$ be a positive harmonic function on $C_p(\theta_1)$ which is continuous on  $\overline{C_p(\theta_1)}$ and  vanishes on $\overline{C_p(\theta_1)} \cap S(\infty)$. Then for all $x \in T(\theta_2,1)$,
$$u(x) \leq \tilde{C}r^{-\eta}(x) u(p'),$$
where $\tilde{C}$ and $\eta$ depend only on $n$, $\alpha$, $\beta$, $\theta_1$ and $\theta_2$.
\end{theorem}

\begin{theorem}
\label{Harnack 2.2}
Let $u$, $v$ be two positive harmonic functions on  $C_p(\theta_1)$ which are continuous on  $\overline{C_p(\theta_1)}$ and which vanish on $\overline{C_p(\theta_1)} \cap S(\infty)$. Then for all $x \in T(\theta_2,1)$,
\begin{displaymath}
 \tilde{C}^{-1}\frac{u(p')}{v(p')} \leq \frac{u(x)}{v(x)} \leq \tilde{C}\frac{u(p')}{v(p')},
\end{displaymath}
where $\tilde{C}$ depend only on $n$, $\alpha$, $\beta$, $\theta_1$ and $\theta_2$.
\end{theorem}

Under the curvature condition
$$K_M \leq -\dfrac{\alpha(\alpha-1)}{r^2}, \;\;\;\; Ric_M \geq -r^{2\beta},$$
there exists a cut-off function $\varphi \in C^\infty(M)$ and a constant $R_0>0$ such that \begin{equation}
\label{cut-off 2}
\left \{
\begin{array}{rcl}
\varphi &=& a  \text{ on } T(\theta_2,R_0),\\
\varphi &=& b \text{ on } \partial C_p(\theta_1)\setminus B_p(R_0),\\
|\nabla \varphi| &=& \mathrm{O}(r^{-\alpha}) \text{ on } T(\theta_2,R_0),\\
|\Delta \varphi| &=& \mathrm{O}(r^{\beta-\alpha}) \text{ on } T(\theta_2,R_0)
\end{array} \right.
\end{equation}
for given $a, b$.

\begin{lemma}
\label{weak harnack 2}
Let $u$ be a positive harmonic function on $C_p(\theta_3)$ which is continuous on  $\overline{C_p(\theta_3)}$ and which vanishes on $\overline{C_p(\theta_3)} \cap S(\infty)$. Then for all $x \in T(\theta_2,1)$,
\begin{displaymath}
u(x) \leq \tilde{C} r^{-\eta}(x) \sup_{\partial C_p(\theta_3)} u,
\end{displaymath}
where $\tilde{C}$ and $\eta$ are constants depending only on $n$, $\alpha$, $\beta$,c $\theta_1$ and $\theta_2$.
\end{lemma}
\begin{proof}
By \eqref{cut-off 2}, there exists $\varphi \in C^{\infty}(M)$ and a constant $R_0>0$ such that
\begin{equation*}
\left \{
\begin{array}{rcl}
\varphi &=& 0  \text{ on } T(\theta_2,R_0),\\
\varphi &=& 1 \text{ on } \partial C_p(\theta_3)\setminus B_p(R_0),\\
|\nabla \varphi| &=& \mathrm{O}(r^{-\alpha}) \text{ on } T(\theta_3,R_0),\\
|\Delta \varphi| &=& \mathrm{O}(r^{\beta-\alpha}) \text{ on } T(\theta_3,R_0).
\end{array} \right.
\end{equation*}

Consider $r^{-\eta}$, where $\eta<\alpha-2-\beta$ is a sufficiently small positive number. We have 
\begin{eqnarray*}
\Delta r^{-\eta} &=&\eta r^{-\eta-2}(\eta+1-r\Delta r)\\
&\leq& \eta r^{-\eta-2}(\eta+1-(n-1)\alpha)<0.
\end{eqnarray*}

Since $\Delta \varphi=\mathrm{O}(r^{\beta-\alpha})=\mathrm{o}(r^{-\eta-2})$, we have $|\Delta \varphi| \leq C_1 \Delta r^{-\eta}$ on $T(\theta_3,R_0)$ for $C_1$ and $R_0$ sufficiently large.

Let $f=\varphi +\tilde{C}r^{-\eta}$. We have $\Delta f \leq 0$ on $T(\theta_3,R_0)$ if $\tilde{C}>C_1$. Also $f \geq 1$ on $\partial T(\theta_3,R_0)$, provided $\tilde{C}$ is sufficiently large.

Now consider $\bar{u}=u / \sup_{\partial C_p(\theta_3)} u$. $\bar{u}$ is harmonic and $\bar{u} \leq 1$ on $\partial C_p(\theta_3)$. We have $\Delta(\bar{u}-f)\geq 0$ on $T(\theta_3,R_0)$ and $\bar{u}-f \leq 0$ on $\partial T(\theta_3,R_0)$. By the maximum principle, $\bar{u} \leq f$ on $T(\theta_3,R_0)$. In particular,
$$u(x) \leq f(x) \sup_{\partial C_p(\theta_3)} u=\tilde{C}r^{-\eta} \sup_{\partial C_p(\theta_3)} u$$
for all $x \in T(\theta_2,R_0)$.
\end{proof}

To prove Theorem \ref{Harnack 2.1}, it is sufficient to show that harmonic functions satisfying the given conditions and $u(p')=1$ are uniformly bounded on $\partial C_p(\theta_3)$.

Let $\delta=(\dfrac{\alpha-4}{3}-\beta)/2>0$. Applying Theorem \ref{gradient estimate} we get the following corollary.
\begin{corollary}
Let $u$ be a positive harmonic function on $M$, then
\begin{displaymath}
|\dfrac{\nabla u}{u}|(x) \leq \tilde{C}r^{\beta+\delta},
\end{displaymath}
where $\tilde{C}$ depends only on $n$. 
\end{corollary}

Therefore
\begin{eqnarray}
\label{log u}|\log u(x)|&=&|\log u(x) -\log u(p')|\\
\nonumber&\leq& \sup_{B_p(r(x))} |\nabla \log u| \cdot  d(x,p')\\
\nonumber&=& \mathrm{O}(r^{\beta+\delta+1}).
\end{eqnarray}

By \eqref{cut-off 2}, there exists $\varphi \in C^{\infty}(M)$ with $\dfrac{2}{3} \leq \varphi \leq 1$ and a constant $R_0>0$ such that
\begin{equation*}
\left \{
\begin{array}{rcl}
\varphi &=& \dfrac{2}{3}  \text{ on } T(\theta_3,R_0),\\
\varphi &=& 1 \text{ on } \partial C_p(\theta_1)\setminus B_p(R_0),\\
|\nabla \varphi| &=& \mathrm{O}(r^{-\alpha}) \text{ on } T(\theta_1,R_0),\\
|\Delta \varphi| &=& \mathrm{O}(r^{\beta-\alpha}) \text{ on } T(\theta_1,R_0).
\end{array} \right.
\end{equation*}

Consider the function $u^\varphi$. We have
\begin{eqnarray}
\nonumber \Delta u^\varphi&=&u^\varphi(|\log u \nabla \varphi+\varphi \nabla \log u|^2+\log u \Delta \varphi+2 \nabla \varphi \cdot \nabla \log u-\varphi |\nabla \log u|^2) \\
&=& u^\varphi (\varphi^2-\varphi)|\nabla \log u|^2+u^\varphi \cdot \mathrm{O}(r^{-\alpha+2\beta+2\delta+1}).\label{Delta 2}
\end{eqnarray}

Let $\psi \in C^\infty(\mathbb{R})$ be a function such that
\begin{equation*}
\left \{
\begin{array}{rl}
&1 \leq \psi \leq C_1,\\
&\psi'(t)=-\dfrac{1}{|t|\log^2 |t|} \text{ for } |t| \geq R_1,\\
&-\psi'(t) \geq 12|\psi''(t)| \geq 0 \text{ for } \text{ for all } t.
\end{array} \right.
\end{equation*}

Let $\xi=\log u$. Set $$F(x)=\psi(\xi(x)-r^{-\varepsilon}(x)) \cdot u^\varphi,$$ where $\varepsilon $ is a positive number to be determined later. For $\psi=\psi(\xi(x)-r^{-\varepsilon}(x))$,
$$\nabla \psi=\psi' \cdot (\nabla \xi+\varepsilon r^{-1-\varepsilon} \nabla r)  ,$$
\begin{eqnarray*}
\Delta \psi&=&\psi'' \cdot |\nabla \xi+\varepsilon r^{-1-\varepsilon} \nabla r|^2+\psi'\cdot (-|\nabla \xi|^2+\varepsilon  r^{-2-\varepsilon}(r\Delta r-(1+\varepsilon)).
\end{eqnarray*}

Therefore
\begin{eqnarray*}
\Delta F &=&\psi \Delta u^\varphi +\Delta \psi u^\varphi  +2\nabla \psi \cdot \nabla u^\varphi\\
&=& \psi \Delta u^\varphi +\psi''  u^\varphi \cdot |\nabla \xi+\epsilon r^{-1-\epsilon} \nabla r|^2+\psi'u^\varphi \cdot ((2\varphi-1)|\nabla \xi|^2\\
&&+2\xi \nabla \varphi \cdot \nabla \xi+2\varepsilon r^{-1-\varepsilon}\varphi \nabla \xi \cdot \nabla r+2\varepsilon r^{-1-\varepsilon} \xi \nabla \varphi \cdot \nabla r+\varepsilon r^{-2-\varepsilon}(r \Delta r -1-\varepsilon))\\
&\leq& u^\varphi [(\dfrac{\varphi'}{3}+2\varphi'')|\nabla \xi|^2+C_2 r^{-\alpha+2\beta+2\delta+1}+\varepsilon \varphi'r^{-2-\varepsilon}(r\Delta r-1-\varepsilon)].\\
\end{eqnarray*}

Since $r\Delta r \geq (n-1)\alpha$, we can take $\varepsilon < \max\{(n-1)\alpha-1, \delta\}$ suffciently small so that 
\begin{eqnarray*}
\Delta F &\leq & u^\varphi (C_2 r^{-\alpha+2\beta+2\delta+1}-C_3 \dfrac{r^{-2-\varepsilon}}{r^{\beta+\delta+1} \log^2 r})\\
&\leq & 0
\end{eqnarray*}
at points $x$ such that $r(x) \geq R_2$ and $\xi(x)-r^{-\varepsilon}(x) \geq R_1$ .

When $\xi-r^{-\varepsilon} \leq R_1$, we have that $u=e^{\xi}$ is bounded on $T(\theta_1,R_2)$. In addition, $u^\varphi \leq C_4 u^{1/2}$ and $u^\varphi |\xi| \leq C_5 u^{1/2}$. Using the fact that $|\psi|$, $|\psi'|$ and $|\psi''|$ are all bounded, we conclude that
\begin{displaymath}
\Delta F \leq C_6 u^{1/2}|\nabla \xi|^2 +C_7 r^{-2-\varepsilon}.
\end{displaymath}

Define $$G=F+C_8 u^{1/2}+r^{-\varepsilon_0},$$ we have $\Delta G \leq 0$ on $T(\theta_1,R_3)$ for $0<\varepsilon_0<\varepsilon$ and $C_8$, $R_3$ sufficiently large. 

Since $u$ is harmonic,
$$\Delta (G-u) \leq 0$$ on $T(\theta_1,R_3)$ and $$G-u\geq F-u\geq \psi u^\varphi-u \geq 0$$ on $\partial T(\theta_1,R_3)$. By the maximum principle, 
$$u \leq G$$ on $T(\theta_1,R_3)$. In particular, $$u \leq G \leq  C_1 u^{2/3}+C_8 u^{1/2}+r^{-\varepsilon_0}$$ on $T(\theta_3, R_3)$, which implies that $u$ is uniformly bounded on $T(\theta_3, R_3)$. By Lemma \ref{weak harnack 2} and the gradient estimate we have for all $x \in T(\theta_2,1)$,
\begin{displaymath}
u(x) \leq \tilde{C} r^{-\eta}(x)u(p'),
\end{displaymath}
which completes the proof of Theorem \ref{Harnack 2.1}.

\begin{proof}[Proof of Theorem \ref{Harnack 2.2}]

Without loss of generality we may assume that $u(p')=v(p')=1$.

By Theorem \ref{Harnack 2.1},
\begin{equation}
\label{u,v bound} u, v \leq C_1 r^{-\eta}
\end{equation}
on $T(\theta_3, 1)$. Let $\xi=-\log u$. It follows from \eqref{log u} that
\begin{equation}
\label{u,v 2}
\xi^{-\epsilon} \geq C_2 r^{-\epsilon(\delta+\beta+1)} \geq C_3 v
\end{equation}
on $T(\theta_3, R_1)$ for $\epsilon>0$ sufficiently small.

By \eqref{cut-off 2}, there exists $\varphi \in C^{\infty}(M)$ with $0\leq \varphi \leq 1$ such that
\begin{equation*}
\left \{
\begin{array}{rcl}
\varphi &=& 0  \text{ on } T(\theta_2,R_1),\\
\varphi &=& 1 \text{ on } \partial C_p(\theta_3)\setminus B_p(R_1),\\
|\nabla \varphi| &=& \mathrm{O}(r^{-\alpha}) \text{ on } T(\theta_3,R_1),\\
|\Delta \varphi| &=& \mathrm{O}(r^{\beta-\alpha}) \text{ on } T(\theta_3,R_1).
\end{array} \right.
\end{equation*}

Consider the function $f=u^{1-\varphi} \xi^{-\epsilon \varphi}$. We have
\begin{equation}
\nabla f=f\cdot(\xi \nabla \varphi-(1-\varphi)\nabla \xi -\epsilon \log \xi \nabla \varphi -\epsilon \varphi \nabla \log\xi),
\end{equation}
and
\begin{eqnarray*}
\Delta f&=&f \cdot (|\xi \nabla \varphi-(1-\varphi)\nabla \xi -\epsilon \log \xi \nabla \varphi -\epsilon \varphi \nabla \log\xi|^2\\
&&+\xi \Delta \varphi +2\nabla \varphi \cdot \nabla \xi-(1-\varphi)\Delta \xi-\epsilon \log \xi \Delta \varphi-2\epsilon \nabla \log \xi \cdot \nabla \varphi-\epsilon \varphi \Delta \log \xi)\\
&\leq& f \cdot [(\varphi^2-\varphi) |\nabla \xi|^2+\epsilon \varphi(1-2\varphi)\dfrac{|\nabla \xi|^2}{\xi}+(\epsilon^2 \varphi^2+\epsilon \varphi)|\nabla \log \xi|^2+C_4 r^{-(\alpha-2\beta-1-\delta)}].
\end{eqnarray*}

Let $\psi \in C^\infty(\mathbb{R})$ be a function such that
\begin{equation*}
\left \{
\begin{array}{rl}
&1 \leq \psi \leq C_5,\\
&\psi'(t)=\dfrac{1}{|t|\log^2 |t|} \text{ for } |t| \geq R_2,\\
&\psi'(t) \geq 12|\psi''(t)| \geq 0 \text{ for } \text{ for all } t.
\end{array} \right.
\end{equation*}

Set
\begin{equation}
F(x)=\psi(\xi+r^{-\varepsilon}) \cdot f,
\end{equation}
where $\varepsilon$ is a positive number to be determined later. We have for $\psi=\psi(\xi+r^{-\varepsilon})$,
$$\nabla \psi=\psi' \cdot (\nabla \xi-\varepsilon r^{-1-\varepsilon} \nabla r),$$
\begin{eqnarray*}
\Delta \psi&=&\psi'' \cdot |\nabla \xi-\varepsilon r^{-1-\varepsilon} \nabla r|^2+\psi'\cdot (\Delta \xi+\varepsilon r^{-2-\varepsilon}(\varepsilon+1-r\Delta r)).
\end{eqnarray*}

Then we have
\begin{eqnarray*}
\Delta F &=&\psi \Delta f +\Delta \psi f  +2\nabla \psi \cdot \nabla f\\
&\leq & \psi f[\varphi(\varphi-1)|\nabla \xi|^2+\varphi(\varepsilon-2\varepsilon\varphi)\dfrac{|\nabla \xi|^2}{\xi}+\varphi(\varepsilon^2\varphi+\varepsilon)\dfrac{|\nabla \xi|^2}{\xi^2}\\
&&+C_4 r^{-(\alpha-2\beta-\delta-1)}]+\psi' f[(2\varphi-1)|\nabla \xi|^2-2\epsilon \varphi \dfrac{|\nabla \xi|^2}{\xi}+2\varepsilon \epsilon \varphi r^{-1-\varepsilon}|\nabla \log \xi|\\
&&+C_6 r^{-(\alpha-2\beta-2\delta-1)}+\varepsilon r^{-2-\varepsilon}(\varepsilon+1-r\Delta r)]\\
&\leq& f[C_7 r^{-(\alpha-2\beta-2\delta-1)}+C_8 \varepsilon r^{-\beta-2\delta-1}r^{-2-\varepsilon}(\varepsilon+1-r\Delta r)]
\end{eqnarray*}
on $T(\theta_3,R_2)$.

We can choose $\varepsilon<(\alpha-2\beta-2\delta-1)-(\beta+2\delta+3)=2\delta$ to be sufficiently small so that
$$\Delta F \leq 0$$
on $T(\theta_3,R_3)$ for $R_3>\max\{R_0,R_1,R_2\}$ sufficiently large.

We alreday know from \eqref{u,v 2} that
$$F=\psi \xi^{-\epsilon} \geq \xi^{-\epsilon} \geq C_3v$$
on $\partial C_p(\theta_3)\setminus B_p(R_3)$. Since $v$ is harmonic, it follows from the gradient estimate that
$$F \geq C_9 v$$ on $\partial T(\theta_3,R_3)$.

On the other hand, on $T(\theta_3,R_3)$ we have 
$$\Delta(F-C_9 v)=\Delta F \leq 0.$$

By the maximum principle, we have $$F\geq C_9v$$ on $\overline{T(\theta_3,R_3)}$. In particular, $$v \leq \dfrac{1}{C_9} F \leq \dfrac{C_5}{C_9} u$$ on $T(\theta_2, R_3)$. By the Harnack inequality $\dfrac{u}{v}$ is uniformly bounded on $T(\theta_2,1)$.  
\end{proof}

\bibliographystyle{amsalpha}
\bibliography{Dirichlet}

\end{document}